\documentclass[final,onefignum]{siamltex1213}
\usepackage{float}
\usepackage[english]{babel}
\usepackage{dsfont}
\usepackage{amssymb}
\usepackage{amsmath}
\usepackage{graphicx}
\usepackage{pict2e}
\usepackage{tikz}
\usetikzlibrary{arrows,positioning}
\tikzset{
    >=stealth,
    %auto,
    %node distance=3.5cm,
    font=\scriptsize,
    possible world/.style={circle,draw,thick,align=center},
    real world/.style={double,circle,draw,thick,align=center},
    minimum size=40pt
}
\tikzstyle{vertex}=[circle, draw, inner sep=0pt, minimum size=6pt]
\newcommand{\vertex}{\node[vertex]}

\newtheorem{example}{Example}[section]
\newtheorem{remark}{Remark}[section]

\renewcommand{\H}{\mathcal{H}}

\newcommand{\perm}{\mathrm{perm}}
\newcommand{\id}{\mathsf{I}}
\newcommand{\colr}{\color{red}}

\title{Algebraic combinatorics on trace monoids: extending number theory to walks on graphs}

\author{P.-L. Giscard\footnotemark[1]
\and P. Rochet\footnotemark[2]}

\begin{document}
\maketitle

\renewcommand{\thefootnote}{\fnsymbol{footnote}}

\footnotetext[1]{University of York, Department of Computer Sciences. Email: pierre-louis.giscard@york.ac.uk}
\footnotetext[2]{Universit\'e de Nantes, Laboratoire de Math\'ematiques Jean Leray. }

\renewcommand{\thefootnote}{\arabic{footnote}}
 
 \pagestyle{myheadings}
\thispagestyle{plain}
\markboth{P.-L. GISCARD AND P. ROCHET}{EXTENDING NUMBER THEORY TO WALKS ON GRAPHS}

\begin{abstract} 
Partially commutative monoids provide a powerful tool to study graphs, viewing walks as words whose letters, the edges of the graph, obey a specific commutation rule. A particular class of traces emerges from this framework, the hikes, whose alphabet is the set of simple cycles on the graph. We show that hikes characterize undirected graphs uniquely, up to isomorphism, and satisfy remarkable algebraic properties such as the existence and uniqueness of a prime factorization. Because of this, the set of hikes partially ordered by divisibility hosts a plethora of relations in direct correspondence with those found in number theory. Some applications of these results are presented, including a permanantal extension to MacMahon's master theorem and a derivation of the Ihara zeta function.
% \textcolor{red}{from an abelianization procedure.: Je propose d'enlever ce bout pour rendre l'abstract plus concis, ici on se fiche un peu des details.}
\end{abstract}

\noindent \small \textbf{Keywords:} Digraph; poset; trace monoid; walks; weighted adjacency matrix; incidence algebra; MacMahon master theorem; Ihara zeta function.

\noindent \small \textbf{MSC: } 	05C22, 05C38, 06A11, 05E99

\vspace{6mm}
\section{Introduction} Several school of thoughts have emerged from the literature in graph theory, concerned with studying walks (also called paths) on directed graphs as algebraic objects. Among the numerous structures proposed over the years are those based on walk concatenation such as the path-algebra \cite{Brion2008} and later, nesting \cite{giscard2012continued} or the cycle space \cite{diestel2000graph}. A promising approach using the theory of partially commutative monoids (also called trace monoids), consists in viewing the arcs (i.e. the directed edges) of a graph as letters forming an alphabet and walks as words on this alphabet. A crucial idea in this approach, proposed by \cite{cartier1969}, is to define a specific commutation rule on the alphabet: two arcs commute if and only if they initiate from different vertices. This construction yields a semi-commutative monoid which allows for a great flexibility in the walk structure while preserving the ability to distinguish between different walks composed of the same arcs. A remarkable consequence of this construction is the existence of a stable subset of traces, formed by collections of cycles: the hikes. More specifically, hikes constitute a simplified trace monoid that carries most of the information pertaining to the graph structure and, in the case of undirected graphs, all the information. In this trace monoid, the simple cycles form the alphabet while the independence relation is characterized by vertex-disjointness.\\

Of fundamental importance for the trace monoid of hikes is the hitherto underappreciated prime-property satisfied by the simple cycles. Recall that an element of a monoid is prime if and only if, whenever it is factor of the product of two elements, then it is a factor of at least one of the two. The importance of the prime property lies in that because of it, the partially ordered set formed by the hikes ordered by divisibility is host to a plethora of algebraic relations in direct extension to number theory. 
This includes identities involving many more objects beyond the well-studied M\"{o}bius function \cite{cartier1969,rota1987foundations}, such as the von Mangoldt and Liouville functions.
In this respect hikes are natural objects to consider, as most of their algebraic properties follow from analytical transformations of the weighted adjacency matrix. The study of the algebraic structures associated with hikes is the main subject of the present work. These structures provide an extended semi-commutative framework to number theory from which both well-known and novel relations in general combinatorics are derived as particular consequences.\\
%\vspace{1mm}

The article is organized as follows. In Section~\ref{sec:2}, we present the theoretical setting required for the construction of hikes as elements of a specific trace monoid. We discuss some immediate consequences, such as the uniqueness of the prime decomposition, the hike analogue of the fundamental theorem of arithmetic. 

Section~\ref{AlgHike} is devoted to the study of algebraic relations between formal series on hikes. In particular, we discuss the hike version of Riemann's zeta function, defined as a formal series of the constant function $1$ over hikes. We then introduce the von Mangoldt function $\Lambda$ over hikes in \S\ref{MangoldtSec} whose formal series stems from a log-derivative formula involving the zeta function, and discuss its connections to closed walks. Further general properties concerning totally additive and totally multiplicative functions over the hikes are derived in \S\ref{TotAddSec} and \S\ref{TotMultSec}. In particular, we show that: i) the convolution of a totally additive function $f$ with the M\"obius function $\mu$ has its support over walks, ii) the inverse through the Dirichlet convolution of a totally multiplicative function $f$ is given by $f^{-1} = f \mu$. These results are illustrated on typical additive functions such as the length $\ell$ or the prime factor counting function $\Omega$, recovering generalisations of classical results of number theory. Another illustration concerns the walk Liouville function which arises from a permanent expression of the labelled adjacency matrix. We establish the relation between these results and their number-theoretic counterparts in \S\ref{NumberTheory}, by showing that there exists a class of infinite graphs on which the hike poset is isomorphic to the poset of integers ordered by divisibility. 

In Section~\ref{IharaZeta} we elucidate the connection between the hike zeta function and the Ihara zeta function $\zeta_I$ of the graph $G$. This connection suggests that hikes hold more information than $\zeta_I$, something we confirm in \S\ref{PGdetermines} by showing that the hike poset determines undirected graphs uniquely, up to isomorphism. Future perspectives and possible extensions of our work are discussed in the conclusion.

\section{General setting}\label{sec:2} Let $G=(V,E)$ be a directed graph with finite vertex set $V = \{v_1,\hdots,v_N \}$ and edge set $E \subseteq V^2$, which may contain loops. An element of $E$ is called a directed edge or an arc. Let $\mathsf{W} = (w_{ij})_{i,j\,=\,1,\hdots,N}$ represent the weighted adjacency matrix of the graph, built by attributing a formal variable $w_{ij}$ to every pair $(v_i,v_j ) \in V^2$ and setting $w_{ij}=0$ whenever there is no arc from $v_i$ to $v_j$ in $G$. In the sequel, we identify each arc of $G$ to the corresponding non-zero variable $w_{ij}$.
\\

A walk, or path, of length $\ell$ from $v_i$ to $v_j$ on $G$ is a contiguous sequence of $\ell$ arcs starting from $v_i$ end ending to $v_j$, e.g. $p = w_{i i_1} w_{i_1 i_2} \cdots w_{i_{\ell-1} j}$ (a sequence of arcs is said to be contiguous if each arc but the first one starts where the previous ended). The walk $p$ is \textit{open} if $i \neq j$ and \textit{closed} (a cycle) otherwise. A backtrack is a cycle of length two, e.g. $w_{ij} w_{ji}$ while a self-loop $w_{ii}$ is considered a cycle of length one.

A walk $p$ is \textit{self-avoiding}, or \textit{simple}, if it does not cross the same vertex twice, that is, if the indices $i,i_1,\hdots,i_{\ell-1},j$ are mutually different (with the possible exception $i=j$ if $p$ is closed).

\subsection{Partially commutative structure on the arcs} We endow the arcs $w_{ij}$ with a partially commutative structure which allows the permutation of two arcs only if they start from different vertices. This particular setting was initially considered in  \cite{cartier1969}. In this section, we discuss the motivations and implications of this structure to study walks and cycles on a graph.\\

\noindent \textbf{Commutation rule:} Two different arcs $w_{ij}$ and $w_{i'j'}$ commute if, and only if, $i \neq i'$.\\

\noindent The finite sequences of arcs form a free partially commutative monoid $\mathcal M$, also called \textit{trace monoid}, with alphabet $\Sigma_{\mathcal M} := \{ w_{ij}: w_{ij} \neq 0 \}$ and independence relation
$$ I_{\mathcal M} = \{ (w_{ij},w_{kl}) : i \neq k \}.   $$
A trace $t$, i.e. an element of $\mathcal M$, can be viewed as an equivalence class in the free monoid generated by the arcs. The equivalence relation is then defined as follows: two sequences of arcs $s, s'$ are equivalent if $s$ can be obtained from $s'$ upon permuting arcs with different starting points. Different elements of an equivalence class $t \in \mathcal M$ will be referred to as \textit{representations} of a trace.

In this setting, a walk (a sequence of contiguous arcs) may have non-contiguous representations. For instance, the walk $w_{12} w_{23}$ from $v_1$ to $v_3$ can be rewritten as $ w_{23} w_{12}$ since $w_{23}$ and $w_{12}$ start from different vertices. In fact, an open walk always has a unique contiguous representation, as any allowed permutations of arcs would break the contiguity. 
Surprisingly, the uniqueness of the contiguous representation no longer holds for closed walks. This consequence is an important feature of the partially commutative structure on the arcs: two closed walks starting from different vertices define the same object if they can be obtained from one another by permuting arcs with different starting points.\\

To illustrate this statement, consider the example pictured in Figure~\ref{fig:ex1}. 
%\vspace{-2mm}
The only closed walk starting from $v_1$ that covers every arc exactly once is $c_1:= w_{12} w_{23} w_{34}  w_{45} w_{53} w_{31} $. Since the only non-commuting arcs are $w_{31}$ and $w_{34}$, the cycle can be rewritten as starting from $v_3$ by $c_1 = w_{34} w_{45} w_{53} w_{31} w_{12} w_{23}$.
% (the symbol $\times$ is omited within commutative groups of indeterminates since it coincides with standard multiplication in this case). 
On the other hand, there are two closed walks starting from $v_3$ covering every arc once, one is $c_1$ and the other is $c_2:= w_{31} w_{12}w_{23}  w_{34} w_{45} w_{53}$. One cannot go from $c_1$ to $c_2$ without permuting $w_{31}$ and $w_{34}$, thus $c_1$ and $c_2$ are different elements in $\mathcal M$. Here, $w_{12} w_{23} w_{31}$ and $ w_{31} w_{12} w_{23}$ are equivalent representations of the same cycle since the permutations of the arcs to go from one to the other are allowed. More generally, the starting vertex of a simple cycle never influences its value.
%This is no longer true if the cycle is not self-avoiding, as illustrated in Figure \ref{fig:ex1}. 
\begin{figure}[h]
\begin{center}
%\vspace{-3mm}
\includegraphics[width=0.4\textwidth]{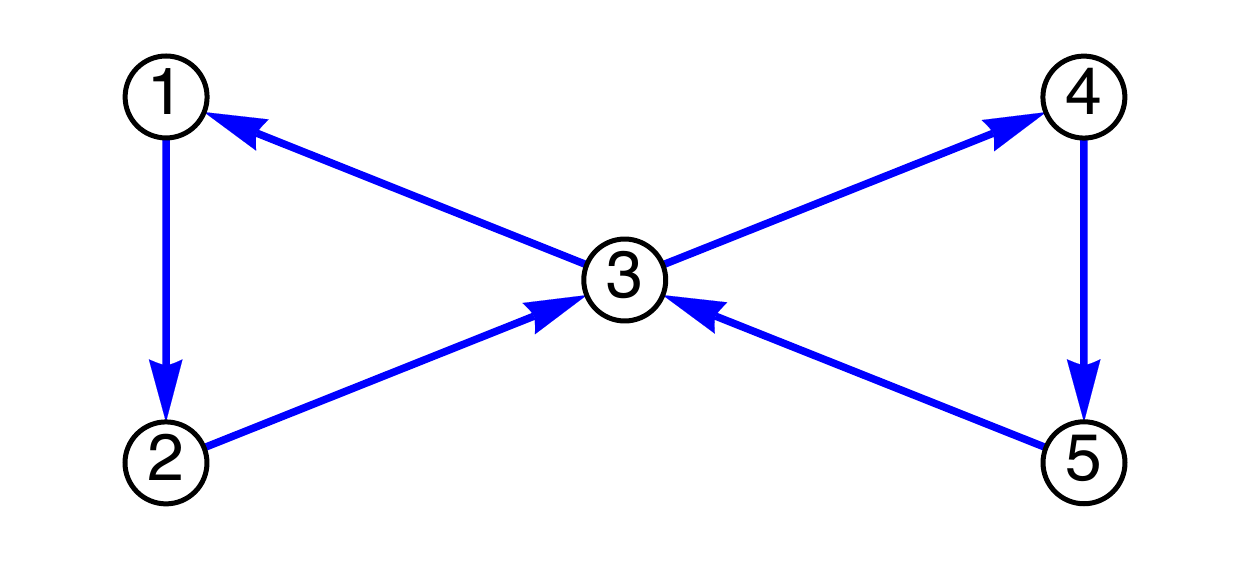}
\vspace{-2mm}
\caption{\footnotesize{The closed walks $c_1 = w_{34} w_{45} w_{53}  w_{31} w_{12} w_{23}$ and $c_2 =  w_{31}w_{12} w_{23} w_{34} w_{45} w_{53}$ are different although composed of the same arcs. Both are achievable starting from $v_3$ but only $c_1$ is achievable from $v_1$.}}
\label{fig:ex1}
\end{center}
\end{figure}

\subsection{Multiplication and factorization of hikes} A closed walk can be characterized as a contiguous sequence of arcs comprising the same number of ingoing and outgoing arcs for each vertex. A hike is obtained upon relaxing the contiguity condition:
\vspace{2mm} 

\begin{definition} A hike is a trace $h=w_{i_1 j_1} \cdots w_{i_\ell j_\ell} \in \mathcal M$ for which the numbers of outgoing and ingoing arcs for each vertex are equal. Formally, the indices $i_k,j_k, k =1,..., \ell$ of the arcs of $h$ satisfy,
\begin{equation}\label{altdef} \forall i=1,\hdots,N \ , \ \# \{ k: i_k = i \} = \# \{k:  j_k = i \}, \end{equation}
%\begin{equation}\label{altdef} \sum_{k=1}^\ell \mathds 1 \{ i_k = i \} = \sum_{k=1}^\ell \mathds 1 \{ j_k = i \}, \end{equation}
where $\#$ stands for the cardinality.
\end{definition}
\vspace{2mm}

\begin{remark} 
Hikes have been studied in various forms in the literature. They originated under the name "circuits" in the seminal paper \cite{cartier1969}, written in french. We here use a different term to avoid confusion, as circuit may refer to other common objects in graph theory. Hikes can also be defined as the particular heaps of pieces formed from the simple cycles of the graph, see \cite{viennot1989heaps}. More recently, their role in gauge theory has been investigated in \cite{mattioli2014quivers}.  \\
\end{remark}

We denote by $\mathcal H$ the set of hikes, which is a subset of $\mathcal M$. By convention, the trivial walk $1$ viewed as the empty sequence is considered to be a hike. We emphasize that, since hikes are elements of $\mathcal M$, they obey the partially commutative structure on the arcs: two hikes $h$ and $h'$ are equal if, and only if, $h'$ can be obtained from $h$ by permuting arcs in $h$ with different starting point. In particular, while every closed walk is a hike, a hike is a closed walk only if it has a contiguous representation. Moreover, we say that a hike is self-avoiding if all its arcs  are different and commute.\\

The multiplication of two hikes $h,h'$, simply defined as the concatenation, yields a hike and shall be denoted by $h.h'$ or simply $hh'$ in the sequel. We define hike division as the reverse operation: $d \in \mathcal H$ divides $h \in \mathcal H$, which we write $d | h$, if there exists $h' \in \mathcal H$ such that $h = d . h'$. We shall use the standard division notation
$$ h = d . h' \ \Longleftrightarrow \ h' = \frac h d.  $$
Here the choice of left-division, rather than right-division, is only a matter of convention. Remark that because the multiplication of hikes is not commutative, $d | h$ does not necessarily imply that $h/d$ divides $h$. \\

\begin{remark} While it may be tempting to identify a hike with an Eulerian (sub-)digraph, the two are nonetheless different objects. An obvious reason is that a hike may contain several appearances of the same arc. Moreover, the non-commutative structure on the arcs induces an order on the oriented cycles it contains. This implies that an Eulerian multi-digraph can not in general be associated with a unique hike. \\
%that an Eulerian subgraph is defined for an undirected graph while a self-avoiding hike depends on the orientation of its edges as soon as it contains a cycle of length greater than two. Moreover, isolated edges may be part of a self-avoiding hike but not of an Eulerian subgraph, which in turn may contain vertices of even degree greater than $2$. 
%Finally, the algebraic structure of hikes differs from that of the cycle space in that hikes may contain multiple occurrences of the same arc.\\
\end{remark}

Every non-trivial hike $h$ has a representation as a product of simple cycles $h = c_1\cdots c_k$ which can be obtained by considering the simple cycles in the order they are formed in the arc sequence. This construction is somewhat similar to Lawler's loop-erasing procedure \cite{lawler1987loop} which divides a closed walk into a finite sequence of simple cycles. By considering hikes, we argue that this decomposition remains natural when relaxing the contiguity condition. 

A main concern of this paper is to treat the decomposition of a hike into simple cycles as a \emph{prime decomposition}, seeing the simple cycles as prime factors. Rigorously, an element $p$ of a monoid is prime if and only if, whenever $p$ is a factor of $a.b$, then $p$ is a factor of $a$ or $b$. Thus, in this context, the primes are indeed given by the  simple cycles. We emphasize that, because of the lack of commutativity, the prime factors of $h$, i.e. the elements of the prime decomposition, are different from its prime divisors.\\

An important consequence of the commutation rule on the arcs is that the prime decomposition $h = c_1\cdots c_k$ is unique up to permutations of consecutive vertex-disjoint simple cycles. Indeed, switching two different consecutive cycles in the prime decomposition $h = c_1 \cdots c_k$ violates the commutation rule as soon as $V(c_i) \cap V(c_{i+1}) \neq \emptyset$. This property highlights that $\mathcal H$ forms a sub-monoid of $\mathcal M$, whose alphabet is the set of prime hikes (the simple cycles) $\Sigma_{\mathcal H} := \{c_1,\hdots,c_k \}$ and with independence relation defined by
\begin{equation}\label{dephike} I_{\mathcal H} = \big\{ (c_i, c_j) : V(c_i) \cap V(c_j) = \emptyset \big\}. \end{equation}

A geometric interpretation of the prime decomposition can be found in Viennot's theory of \textit{heaps of pieces} \cite{viennot1989heaps}.  In this case, the simple cycles are pieces "piled up" in such a way that two simple cycles can only be put to the same level if they share no vertex in common. In fact, heaps of pieces provide a geometric construction for the Cartier-Foata clique decomposition of a trace.\\

\begin{definition} The independence graph of $\mathcal H$ is the undirected graph whose vertices are the simple cycles of $G$ and with an edge between two simple cycles  $c,c' \in \Sigma_\mathcal H$ if they share no vertex in common.\\
\end{definition}

\begin{remark}
In the general case of a trace monoid $\mathcal T$ with alphabet $\Sigma_\mathcal T$ and independence relation $\mathcal I_\mathcal T$, some authors simply define the independence graph of $\mathcal T$ as the directed graph $(\Sigma_\mathcal T, \mathcal I_\mathcal T)$, see e.g. \cite{krob2003computing}. Nevertheless, since the independence relation $\mathcal I_T$ is symmetric, considering the undirected version induces no loss of information.\\
\end{remark}

A clique of the independence graph of $\mathcal H$ can be identified with a product of pair-wise non-intersecting simple cycles, that is, a self-avoiding hike. The Cartier-Foata clique decomposition of a hike $h$ can then be built as follows.

\begin{enumerate}
\item[i)] The maximal self-avoiding divisor of a hike $h$ (i.e. the product $s(h)$ of its prime divisors), is the first clique in the Cartier-Foata decomposition of $h$.
	\item[ii)] If $h$ is self-avoiding, then $s(h)=h$ and $h$ is its own Cartier-Foata decomposition. All simple cycles composing $h$ are vertex disjoint so that they can be put to the same level, forming a heap of height $1$.
	\item[iii)] Otherwise, consider a collection of self-avoiding hikes $s_k$, initiated by $s_1 = s(h)$, and setting
$$s_{k+1} = s \Big( \frac{h}{s_1 \cdots s_{k} } \Big) $$
until all simple cycles of $h$ are made part of a clique $s_k$. Each clique of the Cartier-Foata decomposition defines a layer in the heap of pieces. 
\end{enumerate}
We refer to \cite{krattenthaler2006theory} for a detailed explanation of the representation of heaps of pieces and its equivalence with Cartier-Foata monoids. \\

In the sequel, $\ell(h)$ represents the length of a closed hike $h$ while the number of elements in its prime-decomposition (counted with multiplicity) is denoted by $\Omega(h)$. By convention, the trivial hike 1 is not prime and thus $\Omega(1)=0$.

\subsection{Hikes incidence algebra} The hikes ordered by division form a locally finite partially ordered set, or \textit{poset}, which we denote $P_G$. The reduced incidence algebra on this poset is the set $\mathcal F$ of real-valued functions on $\mathcal H$ endowed with the Dirichlet convolution
$$ f * g (h) := \sum_{d | h} f(d) g \Big( \frac h d \Big) \ , \ h \in \mathcal H.  $$
Here, the sum is taken over all left-divisors $d$ of $h$, including $h$ itself and the trivial hike $1$. One verifies easily that the Dirichlet convolution is associative and distributive over addition. However, it is not commutative since $d$ can divide $h$ without it being the case for $h /d$.\\

\begin{proposition} The reduced incidence algebra $(\mathcal F,*)$ is isomorphic to the algebra of formal series
$$\sum_{h \in \mathcal H} f(h) h  \ , \ f \in \mathcal F $$ 
endowed with hike multiplication.\\
\end{proposition}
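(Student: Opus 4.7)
The plan is to exhibit the obvious bijection $\Phi : \mathcal{F} \to \mathcal{A}$, where $\mathcal{A}$ denotes the algebra of formal series $\sum_{h \in \mathcal{H}} f(h)\, h$ with hike multiplication, defined by $\Phi(f) := \sum_{h \in \mathcal{H}} f(h)\, h$. Bijectivity and $\mathbb{R}$-linearity are immediate from the fact that a formal series is determined by, and determines, its coefficient function. The content of the proposition is therefore the compatibility with multiplication, i.e.\ that $\Phi(f * g) = \Phi(f)\, \Phi(g)$ for all $f, g \in \mathcal{F}$.

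To verify this, I would expand the product on the right by bilinearity:
\begin{equation*}
\Phi(f)\, \Phi(g) \;=\; \Big( \sum_{d \in \mathcal{H}} f(d)\, d \Big) \Big( \sum_{e \in \mathcal{H}} g(e)\, e \Big) \;=\; \sum_{h \in \mathcal{H}} \bigg( \sum_{\substack{(d,e)\in \mathcal{H}^2 \\ d \cdot e = h}} f(d)\, g(e) \bigg)\, h,
\end{equation*}
and then identify the inner sum with $(f*g)(h) = \sum_{d \mid h} f(d)\, g(h/d)$. The identification rests on showing that the map $(d,e) \mapsto d$ is a bijection between $\{(d,e) \in \mathcal{H}^2 : d\cdot e = h\}$ and $\{d \in \mathcal{H} : d \mid h\}$, with inverse $d \mapsto (d, h/d)$. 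Surjectivity is the definition of left-divisibility, and injectivity is precisely left-cancellativity: if $d\cdot e = d\cdot e'$ then $e = e'$. This is the only non-trivial point of the argument.

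Left-cancellativity in $\mathcal{H}$ is inherited from the ambient trace monoid $\mathcal{M}$, which is well known to be cancellative (a standard property of partially commutative monoids, and here directly visible from the Cartier--Foata normal form or from the uniqueness of the prime factorisation already established in the excerpt: if $d \cdot e = d \cdot e'$ then comparing the prime decompositions layer-by-layer forces the factorisations of $e$ and $e'$ to coincide up to permutation of vertex-disjoint cycles, hence $e = e'$ as traces). I expect this cancellativity verification to be the main, and essentially only, obstacle; once it is in hand the notation $h/d$ is unambiguous and the computation above shows $\Phi(f*g) = \Phi(f)\,\Phi(g)$, completing the proof that $\Phi$ is an algebra isomorphism.
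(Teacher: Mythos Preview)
Your proof is correct and follows essentially the same approach as the paper: both define the obvious bijection, expand the product of formal series bilinearly, and regroup by the value of the product $d\cdot e$ to recover the Dirichlet convolution. You are in fact more careful than the paper, which simply writes $h/d$ and ``regroups the couples $(h,h')$ for which $h.h'$ are equal'' without explicitly noting that left-cancellativity of the trace monoid is what makes this regrouping a bijection onto the left divisors; your identification of that point as the only non-trivial step is spot on.
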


The formal series over hikes may be understood as abstract representations of the elements of $\mathcal F$, in the sense that two functions $f,g \in \mathcal F $ are equal if, and only if their formal series are identical.  The hike multiplication naturally extends over formal series by setting for $f,g \in \mathcal F$,
$$ \bigg( \sum_{h \in \mathcal H} f(h) h  \bigg) . \bigg( \sum_{h' \in \mathcal H} g(h') h'  \bigg) : = \sum_{h ,h' \in \mathcal H} f(h) g(h') h h' . $$
By regrouping on the right-hand side the couples $(h,h')$ for which the product $h.h'$ are equal, one recovers the formal series of the convolution $f*g$,
$$ \sum_{h ,h' \in \mathcal H} f(h) g(h') h h' = \sum_{h \in \mathcal H} \bigg( \sum_{d | h} f(d) g \Big( \frac hd \Big) \bigg) h = \sum_{h \in \mathcal H}  f*g(h) h ,$$
thus the proposition.\\

\noindent Important functions of the reduced incidence algebra include the identity $\delta(.)$ equal to one for $h=1$ and zero otherwise, the constant function $1(h)=1 \ , \ \forall h \in \mathcal H$ or the M\"{o}bius function, the inverse of $1$ through the Dirichlet convolution. We refer to \cite{rota1987foundations} for a more comprehensive study. It is one of the main results of the present work that many more number-theoretic functions beyond $1$ and $\mu$ have generalized analogs in the reduced incidence algebra $(\mathcal{F},\ast)$ and that these analogs satisfy the same relations as their number-theoretic counterparts, see \S\ref{AlgHike}.
\vspace{2mm}

The next theorem gives the expression of the M\"{o}bius function on $\mathcal H$. This result is discussed in Remark 3.6 in \cite{cartier1969}. Nevertheless, we provide an elementary proof for the sake of completeness.
\vspace{2mm}

\begin{proposition}\label{th:mob} The M\"{o}bius function on $\mathcal H$ is given by
\begin{equation}\label{mobius} \mu(h) := \left\{ \begin{array}{cl} 
1 & \text{if $h=1$} \\ 
(-1)^{\Omega(h)} & \text{if $h$ is self-avoiding} \\ 0 & \text{otherwise.}  \end{array} \right.  \end{equation}
\end{proposition}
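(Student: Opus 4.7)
The plan is to verify directly that the function $\mu$ defined by the claimed formula is the left-inverse of the constant function $\mathbf{1}$ under Dirichlet convolution, i.e.\ that $\mu \ast \mathbf{1} = \delta$. The case $h=1$ is immediate since the only divisor of the trivial hike is itself. For $h\neq 1$, I would rewrite
\begin{equation*}
\sum_{d\,|\,h}\mu(d) \;=\; \sum_{\substack{d\,|\,h\\ d\text{ self-avoiding}}}(-1)^{\Omega(d)},
\end{equation*}
since the middle case in the piecewise definition kills every other divisor, and then show the right-hand side vanishes by a binomial-type cancellation.

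The key structural step is to identify the set of self-avoiding left-divisors of $h$ with the set of divisors of the first Cartier--Foata clique $s(h)$. One direction is obvious: since $s(h)\,|\,h$, every divisor of $s(h)$ is a self-avoiding left-divisor of $h$. For the reverse direction, I would argue that if $d$ is a self-avoiding hike with $d\,|\,h$, then each prime (simple cycle) appearing in $d$ must be a prime divisor of $h$ that can be commuted to the leftmost position; by the construction recalled in the excerpt, such simple cycles are exactly those making up $s(h)$, and their pairwise vertex-disjointness inside $d$ guarantees that $d$ itself divides the self-avoiding hike $s(h)$.

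Once this correspondence is established, the sum reduces to
\begin{equation*}
\sum_{d\,|\,s(h)}(-1)^{\Omega(d)} \;=\; \sum_{S\subseteq\{c_1,\dots,c_k\}}(-1)^{|S|} \;=\; (1-1)^k,
\end{equation*}
where $s(h)=c_1\cdots c_k$ is the (unordered, because pairwise vertex-disjoint) factorization into primes. Since $h\neq 1$ forces $k\geq 1$, this quantity is $0$, and the identity $\mu\ast\mathbf{1}=\delta$ is proved, which uniquely characterizes the Möbius function.

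The main obstacle I anticipate is the rigorous justification of the correspondence between self-avoiding left-divisors of $h$ and divisors of $s(h)$. It relies crucially on the commutation rule of $\mathcal{H}$ (two simple cycles commute iff they are vertex-disjoint) together with the uniqueness of the Cartier--Foata decomposition; any self-avoiding $d\,|\,h$ has its primes all commuting pairwise, so the left-divisibility condition forces them into the leading clique. Everything else is bookkeeping: the binomial cancellation, associativity of $\ast$, and the well-known fact that a left-inverse of $\mathbf{1}$ in the reduced incidence algebra is unique and coincides with the Möbius function.
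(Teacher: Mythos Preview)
Your proposal is correct and follows essentially the same route as the paper: both reduce $\sum_{d|h}\mu(d)$ to a sum over divisors of the maximal self-avoiding divisor $s(h)$, and then show that this sum vanishes. The only cosmetic difference is that the paper finishes by induction on the number of pairwise vertex-disjoint prime factors of $s(h)$, whereas you invoke the binomial identity $\sum_{S\subseteq\{c_1,\dots,c_k\}}(-1)^{|S|}=(1-1)^k$ directly; these are the same computation.
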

\begin{proof}The M\"{o}bius function on $\mathcal H$ is the inverse of the constant function $1$, i.e. the unique function $\mu$ such that $\mu(1) = 1$ and
\begin{equation}\label{mobdir} \forall h \neq 1 , \ \mu *1(h)  = \sum_{d | h} \mu(d) 1 \Big( \frac h d \Big) = \sum_{d | h} \mu(d) = 0.\end{equation}
We need to verify that $\mu$ as defined in \eqref{mobius} satisfies this relation. Let $h\neq 1$ and $s(h)$ denote the largest self-avoiding divisor of $h$, i.e. the product of all its prime divisors. Since the self-avoiding divisors of $h$ are the divisors of $s(h)$ and $\mu(d) =0$ whenever $d$ is not self-avoiding, it follows that $\mu*1(h) = \mu*1(s(h))$. So, it suffices to show the result for $h$ self-avoiding. We proceed by induction. If $h=c$ is a simple cycle, then we verify easily the relation
$$ \mu*1(c) = \mu(1) + \mu(c) = 1 - 1 = 0.   $$
Now, let $c_1, \hdots, c_k, c_{k+1}$ be vertex-disjoint cycles and assume that \eqref{mobdir} holds for $h=c_1\hdots c_k$. We have
$$ \sum_{d | h. c_{k+1}} \mu(d) = \sum_{d | h} \mu(d) +  \sum_{d | h} \mu(d.c_{k+1})  = \sum_{d | h} \mu(d) - \sum_{d | h} \mu(d) = 0, $$
ending the proof.\qquad
\end{proof}
\vspace{2mm}

Proposition \ref{th:mob} confirms the characterization of $\H$ as the trace monoid generated by the alphabet of simple cycles $\Sigma_{\H} =\{c_1,\hdots,c_k \} $ with independence relation defined in Equation \eqref{dephike} (see Eq. (56), Chapter 2.5 in \cite{sandor2004handbook}). The formal series associated to the M\"{o}bius function for $\mathcal H$ then appears in the identity 
\begin{equation}\label{mobius_inv} \det(\id - \mathsf{W}) = \sum_{h \in \mathcal H} \mu(h) h,  \end{equation}
where we recall that $\mathsf W$ denotes the labelled adjacency matrix of $G$. A proof of this identity can be found in Theorem~1 of \cite{ponstein1966self} on noting that for self-avoiding hikes, the concatenation of arcs coincides with the ordinary multiplication. Proposition \ref{th:mob} thus provides a determinant formula for the M\"{o}bius function of $\H$ and the formal series associated to $1$ (i.e. the analogue of the zeta function) is obtained via the formal inversion
$$ \det(\mathsf{I} - \mathsf{W})^{-1} = \frac{1}{ \sum_{h \in \mathcal{H}} \mu(h) h} 
= \sum_{h \in \mathcal{H}} h.  $$

\begin{remark}[Coprimality] The M\"{o}bius function is multiplicative on vertex-disjoint hikes,
\begin{equation}\label{multiplicative}  V(h) \cap V(h') = \emptyset  \ \Longrightarrow \ \mu(hh') = \mu(h) \mu(h'). \end{equation}
This identity is reminiscent of the multiplicative property of the number-theoretic M\"{o}bius function $\mu_{\mathbb{N}}$ for which $\mu_{\mathbb{N}}(nm)=\mu_{\mathbb{N}}(n)\mu_{\mathbb{N}}(m)$ whenever $n$ and $m$ are coprime integers. The fact that \eqref{multiplicative} only holds for vertex-disjoint hikes suggests a more general notion of coprimality on $\mathcal H$: two hikes are coprime if they share no vertex in common. In particular, coprime hikes have different prime factors, but contrary to natural integers, this condition is in general not sufficient. The two notions of coprimality coincide on a class of graphs where $\mu_{\mathbb{N}}$ is recovered from $\mu$, see \S\ref{NumberTheory}.\\
\end{remark}

A determinantal expression for the M\"{o}bius function of certain trace monoids $\mathcal{M}$ that is similar to Eq.~(\ref{mobius_inv}) was obtained in \cite{choffrut1999determinants}. 
Nevertheless, these two results have different domains of validity
 and  
arise from different constructions.  In \cite{choffrut1999determinants}, the expression of the Mobius function as $\det(\id - \mathsf{X})$
involves a matrix $\mathsf{X}$ whose entries are polynomials in the letters of the trace monoid. Furthermore, this formula holds if and only if the independence relation admits a transitive orientation (see  \cite{diekert1988transitive}). 

The situation is different for Eq.~(\ref{mobius_inv}) since the weighted adjacency matrix $\mathsf{W}$ involves the arcs of $G$, which are \textit{subdivisions} of the simple cycles of $G$ and thus subdivisions of the letters of $\mathcal{H}$. In this setting, a transitive orientation is not necessary anymore for Eq.~(\ref{mobius_inv}) to hold, since
% hike trace monoid 
$\H$ is not necessarily transitively orientable (see Example~\ref{ExNoTransitive} below). This means that the determinant formula Eq.~\eqref{mobius_inv} holds in situations where the result of \cite{choffrut1999determinants} does not apply. On the other hand, the reverse is also true: there exist transitively orientable trace monoids which do not constitute hike trace monoids. For instance, one can show with little work that no digraph has the cycle on six vertices $C_6$ as hike independence graph, yet one can easily construct a transitively orientable trace monoid with independence graph $C_6$. 
Thus, our result and those of \cite{choffrut1999determinants, diekert1988transitive} seem to be complementary. A complete characterization of hike trace monoids is beyond the scope of this work.
\vspace{2mm}

\begin{example}[Hike trace monoid with no transitive orientation and a determinantal M\"{o}bius function]\label{ExNoTransitive}
Let $G$ be the cycle graph on 5 vertices illustrated in Figure \ref{fig:tracemon}. There are seven primes on $G$: $a=w_{13}w_{31}$, $b=w_{24}w_{42}$, $c=w_{35}w_{53}$, $d=w_{14}w_{41}$, $e=w_{25}w_{52}$, $f=w_{13}w_{35}w_{52}w_{24}w_{41}$ and $g=w_{14}w_{42}w_{25}w_{53}w_{31}$. 
\begin{figure}[H]
\vspace{-0mm}
	\centering
		\includegraphics[width=0.25\textwidth]{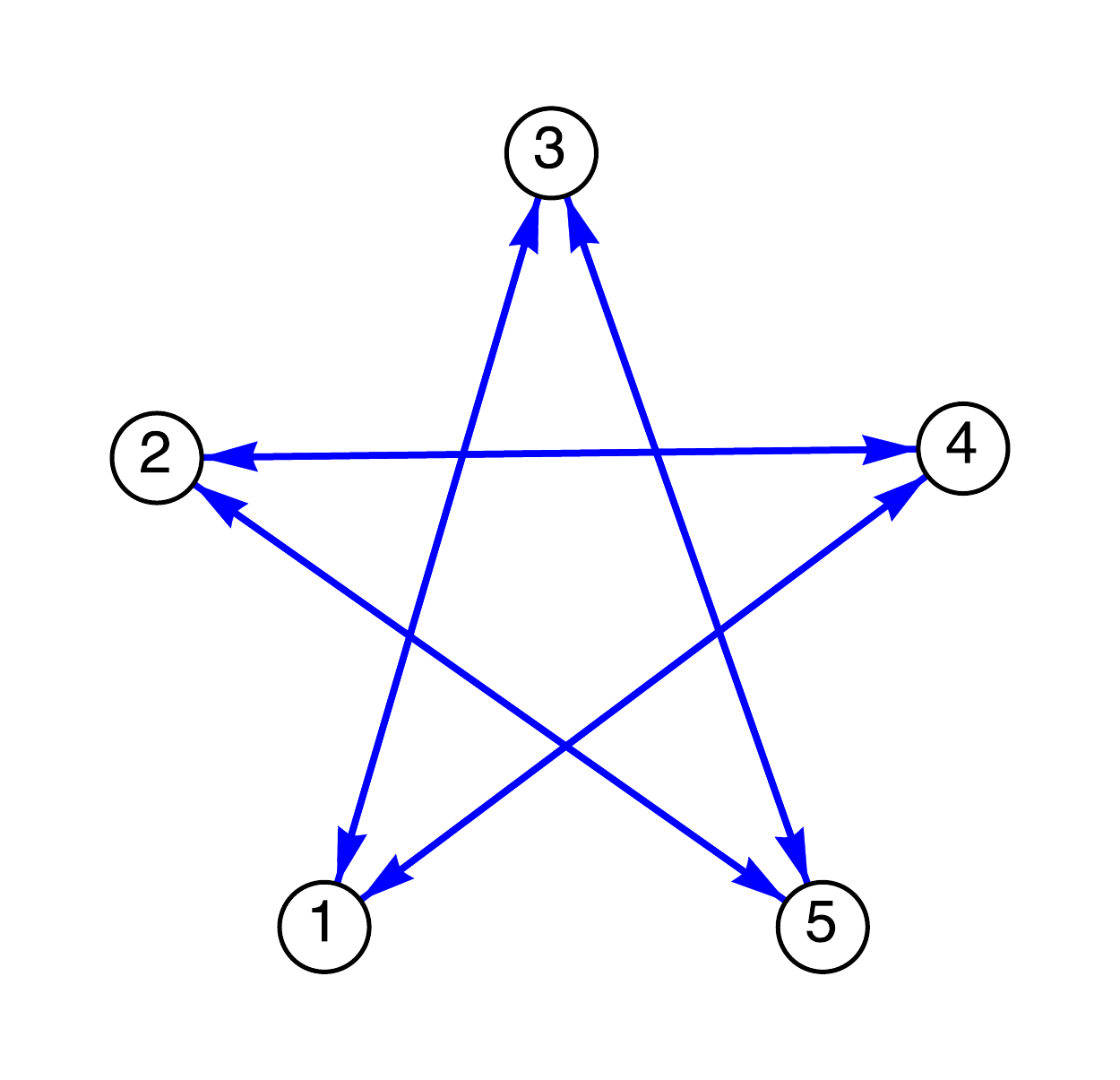} \hspace{2cm} 
		\includegraphics[width=0.25\textwidth]{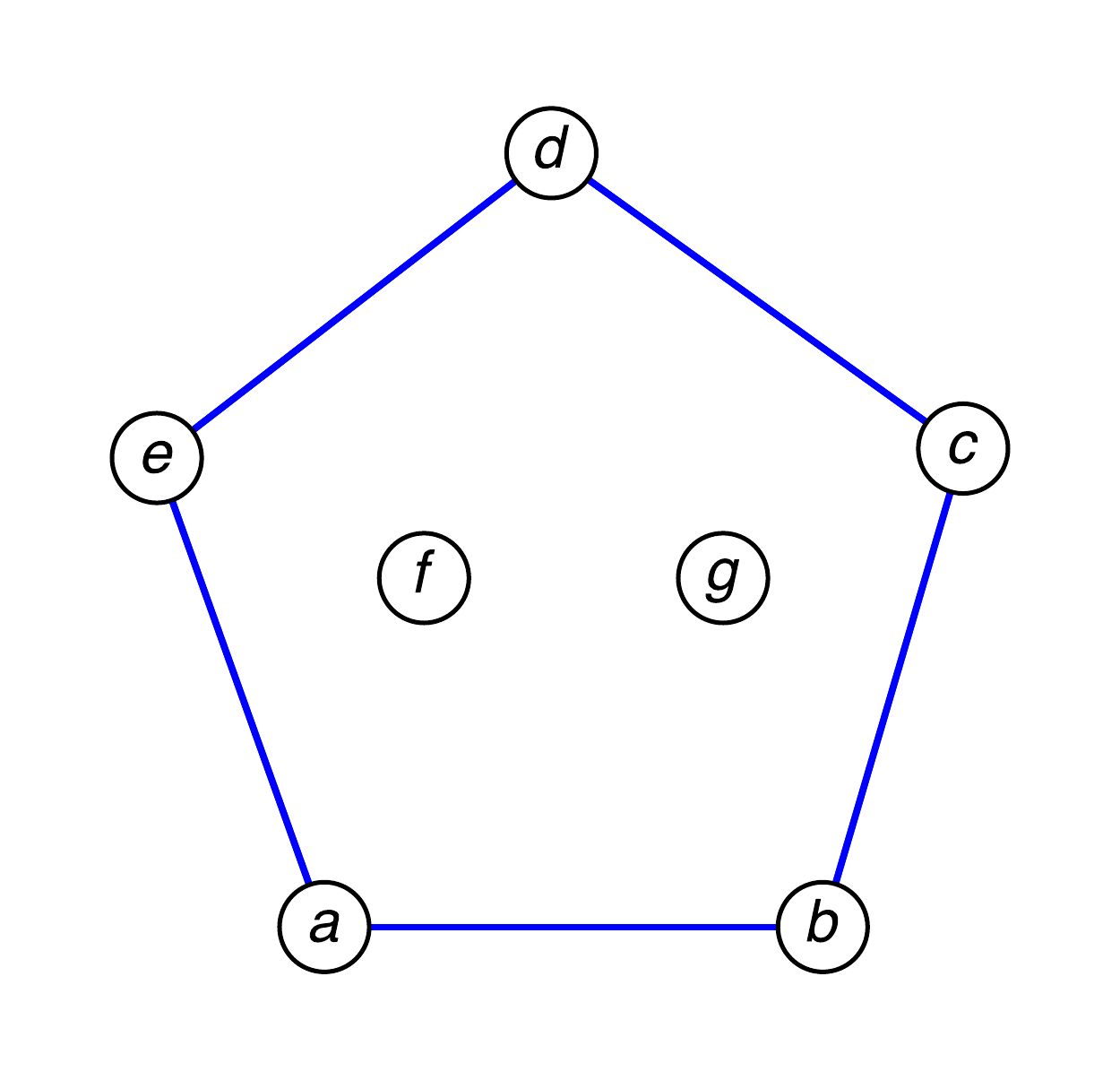}
		\caption{\label{C5graph}The bidirected cycle graph on 5 vertices (left) and its hike independence graph (right).}
	\label{fig:tracemon}
	\vspace{-3mm}
\end{figure}
\noindent Therefore, the hikes on $G$ form the trace monoid $\H$ on seven letters $\Sigma_\mathcal H = \{a,b,c,d,e,f,g\}$ with independence relation
$$ I_\H = \{ (a,b),(b,a),(b,c),(c,b),(c,d),(d,c),(d,e),(e,d),(e,a),(a,e) \}. $$
The commutation graph of $\mathcal H$, presented on Figure \ref{fig:tracemon} (right), is not a comparability graph since it contains a cycle of length $5$ as an induced subgraph (see also Example 11.ii of \cite{diekert1988transitive}). 
Consequently, $I_\H$ does not admit a transitive orientation, yet Eq.~\eqref{mobius_inv} indicates that
\begin{align*}
\sum_{h\in\H} \mu(h) h&=1-a-b-c-d-e-f-g+ac+ad+bd+be+ce,\\
&=\det(\mathsf{I}-\mathsf{W})=\det\left(
\begin{array}{ccccc}
 1 & 0 & -w_{13} & -w_{14} & 0 \\
 0 & 1 & 0 & -w_{24} & -w_{25} \\
 -w_{31} & 0 & 1 & 0 & -w_{35} \\
 -w_{41} & -w_{42} & 0 & 1 & 0 \\
 0 & -w_{52} & -w_{53} & 0 & 1 \\
\end{array}
\right),
\end{align*}
that is, the M\"{o}bius function of $\H$ admits a determinantal form.
%for $\mathsf{W}$ the weighted adjacency matrix of $G$. We readily verify this result to be correct. 
\end{example}

\section{Algebraic relations between series on hikes}\label{AlgHike} 
%The reduced incidence algebra of the poset of hikes ordered by divisibility provides powerful algebraic tools for dealing with combinatorial problems involving hikes. 
In this section we show that a plethora of number theoretic relations find natural extensions on the trace monoid of hikes. These provide powerful algebraic tools in a novel graph theoretic context and yield further insights into well established results. For example, we find in \S\ref{TotMultSec} that MacMahon's master theorem and the Dirichlet inverse of totally multiplicative functions over the integers both originate from the same general result about series of hikes. Throughout this section, $G$ designates a digraph and $P_G$ is the poset of hikes on $G$ ordered by divisibility.\\

\begin{definition} The formal series associated to a function $f\in\mathcal{F}$ is defined as 
$$\mathcal{S}f(s):= \sum_{h\in\H}e^{-s\ell(h)} f(h) h$$
for $s$ a complex variable. The particular case $f=1$ is the zeta function $\zeta(s) := \mathcal{S}1(s)$. \\
\end{definition}

Recall that because of the lack of commutativity between hikes, Dirichlet convolution typically acts non-commutatively on functions on hikes $g*f\neq f*g$ and thus hike-series also multiply non-commutatively, i.e. $\mathcal{S} f.\mathcal{S} g=\mathcal{S}(f*g)\neq \mathcal{S} g.\mathcal{S} f=\mathcal{S}(g*f)$.
For convenience, we write $\mathcal{S}f /\mathcal{S}g$ for the \textit{right} multiplication with the inverse: $\mathcal{S}f/\mathcal{S}g = \mathcal{S}f.(\mathcal{S}g)^{-1}$.\\

We begin with two simple relations counting the left divisors and left prime divisors of a hike.
\vspace{1mm} 
\begin{proposition}\label{GeneralResults} Let $\tau(h)$ be the number of left divisors of $h\in\H$. Then $$\mathcal{S}\tau(s)=\zeta^2(s).$$
Let $1_{p}$ be the indicator function on primes and $\omega(h)$ the number of prime divisors of $h$. Then
$$
\mathcal{S}\omega(s)=\mathcal{S}1_{p}(s)\,.\, \zeta(s).
$$
 \end{proposition}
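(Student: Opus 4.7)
The plan is to observe that both $\tau$ and $\omega$ are themselves Dirichlet convolutions of simple functions already present in the algebra, and then to transfer the identity from $(\mathcal{F},\ast)$ to formal series via the isomorphism established in the previous proposition, taking into account the exponential weight $e^{-s\ell(h)}$.

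First I would rewrite the two arithmetic functions in convolutional form. By the very definition of $\tau$,
\[
\tau(h)=\sum_{d\mid h}1=\sum_{d\mid h}1(d)\,1(h/d)=(1\ast 1)(h),
\]
so $\tau=1\ast 1$. Similarly, since $1_{p}(d)=1$ precisely when $d$ is a (left) prime divisor of $h$,
\[
\omega(h)=\sum_{d\mid h}1_{p}(d)=\sum_{d\mid h}1_{p}(d)\,1(h/d)=(1_{p}\ast 1)(h).
\]
These are the only combinatorial inputs; both rest on the uniqueness of the left-division operation on $\mathcal H$.

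Next I would check that the map $f\mapsto \mathcal{S}f(s)$ is multiplicative with respect to Dirichlet convolution, i.e.
\[
\mathcal{S}f(s)\,.\,\mathcal{S}g(s)=\mathcal{S}(f\ast g)(s).
\]
This is a weighted refinement of the isomorphism between $(\mathcal F,\ast)$ and the algebra of formal series already proved. The only ingredient beyond that proposition is the total additivity of the length, $\ell(h.h')=\ell(h)+\ell(h')$, which makes the weight $e^{-s\ell(\cdot)}$ factor through concatenation:
\[
\mathcal{S}f(s)\,.\,\mathcal{S}g(s)
=\sum_{h,h'\in\mathcal H}e^{-s(\ell(h)+\ell(h'))}f(h)g(h')\,h.h'
=\sum_{H\in\mathcal H}e^{-s\ell(H)}\!\!\sum_{d\mid H}\!f(d)g(H/d)\,H,
\]
and regrouping pairs $(h,h')$ with the same product $H=h.h'$ gives $\mathcal{S}(f\ast g)(s)$ by the definition of left divisibility.

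Applying this homomorphism to the two identities from the first step yields $\mathcal{S}\tau(s)=\mathcal{S}1(s)\,.\,\mathcal{S}1(s)=\zeta^{2}(s)$ and $\mathcal{S}\omega(s)=\mathcal{S}1_{p}(s)\,.\,\mathcal{S}1(s)=\mathcal{S}1_{p}(s)\,.\,\zeta(s)$, which is exactly the claim. There is no real obstacle here: the only point requiring care is that non-commutativity of hike multiplication forces the convolution to be taken in the correct order (prime factor on the left), but this is automatic from our convention of left-division and explains why the result reads $\mathcal{S}1_p\cdot\zeta$ rather than $\zeta\cdot\mathcal{S}1_p$.
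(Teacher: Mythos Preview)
Your proof is correct and follows essentially the same approach as the paper: both rest on the identities $\tau=1\ast 1$ and $\omega=1_p\ast 1$, transferred to formal series via the algebra isomorphism. You add the explicit verification that the weight $e^{-s\ell(\cdot)}$ respects concatenation because $\ell$ is additive, which the paper leaves implicit; this is a welcome clarification but not a different method.
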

\begin{proof}
The results follow immediately from combinatorial arguments on the reduced incidence algebra of $P_G$. First, we have $\tau(h)=\sum_{d|h} 1 = (1*1)(h)$. For the second result, observe that $(1_p*1)(h)=\sum_{d|h} 1_p(d) =\omega(h)$ counts the distinct left prime divisors of $h$.\qquad
\end{proof}
\vspace{2mm}

\begin{remark} The functions $\Omega$ and $\omega$ coincide over self-avoiding hikes, unlike their number theoretic counterparts for which $\Omega(n)=\omega(n)$ if and only if $n$ is a square-free integer. This is due to the stronger characterization of the co-primality for hikes, which requires that the prime factors be not only different but also vertex-disjoint. Indeed, if two different prime factors of a hike $h$ do intersect, then at least one them is not a left-divisor, resulting in $\Omega(h)$ being greater than $\omega(h)$. The parallel with number theory is however accurate if all the simple cycles commute, in which case a prime factors is always a divisor.\\
\end{remark}

While the relations satisfied by the functions $\tau$ and $\omega$ stem from straightforward combinatorial arguments, more advanced algebraic concepts also have natural extensions on the monoid of hikes. 
%We begin with the von Mangoldt function on hikes.

\subsection{Walk von Mangoldt function}\label{MangoldtSec} We begin with a hike version of the number theoretic von Mangoldt function. \\

\begin{definition}\label{wLambda}
The walk von Mangoldt function $\Lambda:~\H\to\mathbb{N}$ is defined as the number of contiguous representations of a hike that is, $\Lambda(h)$ is the number of possible contiguous rearrangements of the arcs in $h$, obtained without permuting two arcs with the same starting point. \\
\end{definition}

By convention, we set $\Lambda(1)=0$. Remark that by this definition, $\Lambda(h)=0$ whenever $h$ is not a walk. Since different contiguous representations of the same walk start from different vertices, the series associated to $\Lambda$ is obtained by
\begin{equation}\label{Lambda_trace} \mathcal S \Lambda(s) = \sum_{h \in \mathcal H} e^{-s \ell(h)} \Lambda(h) h = \mathrm{Tr} \big(e^{-s} \mathsf W + e^{-2s} \mathsf W^2+... \big) = \mathrm{Tr} \big( (\id - e^{-s}\mathsf W)^{-1}\big) - N.  \end{equation}

The heaps of pieces point of view provides a remarkable characterization of closed walks as heaps of cycles with a unique high-most element (such heaps are called pyramids). Using our terminology, this translates into a (non-trivial) closed walk being a hike with a unique prime right-divisor, whose length is precisely the von Mangoldt function. To go even further, Viennot remarked that each piece in a heap can be associated to the pyramid formed by the pieces below it, including itself, thus revealing the bijection existing between the pieces composing a heap and its sub-heaps containing only one maximal element (this fact is discussed in the proof of Proposition 6 in \cite{viennot1989heaps}). In the context of hikes, this signifies that the prime factors a hike $h$ can be put in bijection with the non-trivial walks dividing it. In particular, summing the von Mangoldt function over all divisors of a hike $h$ reduces to summing the length of its prime factors:
$$ \forall h \in \mathcal H \ , \  \sum_{d | h} \Lambda(d) = \ell(h) \ \Longleftrightarrow \ \Lambda*1 = \ell  \ \Longleftrightarrow \ \Lambda= \ell*\mu.  $$
This powerful observation yields the following result as an immediate consequence.\\

\begin{proposition}\label{LogDerivative} The von Mangoldt function is linked to the zeta function by the following relation
%Let $G$ be a graph with $\zeta$ the zeta function of $P_{G}$. Then
\begin{equation}\label{zprimeoverz}
 \mathcal S \Lambda(s) =\sum_{h \in \mathcal H}e^{-s\ell(h)}\Lambda(h) h = - \frac{\zeta'(s)}{\zeta(s)}
\end{equation}
where $\zeta'(s):=d\zeta(s)/ds = - \sum_{h \in \mathcal H} e^{-s \ell(h)} \ell(h) h$.\\
\end{proposition}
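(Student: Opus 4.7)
The plan is to leverage the identity $\Lambda = \ell \ast \mu$ that is derived (via the heaps-of-pieces bijection between prime factors of $h$ and its non-trivial walk divisors) in the paragraph preceding the proposition, and then translate it through the algebra isomorphism between $(\mathcal{F},\ast)$ and the algebra of formal hike series.

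First I would recall that, by the proposition identifying $(\mathcal{F},\ast)$ with formal series under hike multiplication, convolution of functions corresponds to right-multiplication of series in the same order, i.e.\ $\mathcal{S}(f\ast g) = \mathcal{S}f \cdot \mathcal{S}g$. Applying this to $\Lambda = \ell \ast \mu$ gives
\begin{equation*}
\mathcal{S}\Lambda(s) \;=\; \mathcal{S}\ell(s)\cdot \mathcal{S}\mu(s).
\end{equation*}
I must be careful with the non-commutativity of the convolution, but here the order is fixed by $\Lambda = \ell \ast \mu$, placing the inverse of $\zeta$ on the right, which is consistent with the notational convention $\mathcal{S}f/\mathcal{S}g := \mathcal{S}f\cdot(\mathcal{S}g)^{-1}$ adopted just before the statement.

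Next I would identify the two factors. By definition $\mathcal{S}\mu(s) = \sum_{h\in\H} e^{-s\ell(h)}\mu(h)\, h$, and since $\mu \ast 1 = \delta$ we obtain $\mathcal{S}\mu(s) = \zeta(s)^{-1}$. For the other factor, I would differentiate the defining series of $\zeta$ termwise with respect to $s$, noting that this is legitimate at the level of formal series since $\ell(h)$ is a well-defined non-negative integer for each hike:
\begin{equation*}
\zeta'(s) \;=\; \frac{d}{ds}\sum_{h\in\H} e^{-s\ell(h)} h \;=\; -\sum_{h\in\H} e^{-s\ell(h)}\ell(h)\, h \;=\; -\mathcal{S}\ell(s).
\end{equation*}

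Combining these two ingredients yields $\mathcal{S}\Lambda(s) = -\zeta'(s)\cdot \zeta(s)^{-1} = -\zeta'(s)/\zeta(s)$, which is the claimed formula. There is no real obstacle here: the substance of the argument lies in the heaps-of-pieces bijection proving $\Lambda\ast 1 = \ell$ (already established above), while the proposition itself is essentially a formal-series reformulation. The only point requiring attention is the ordering of the non-commutative product, which is preserved throughout.
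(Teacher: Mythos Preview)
Your proof is correct and follows essentially the same route as the paper: both arguments translate the convolution identity relating $\Lambda$, $\ell$, and the M\"obius function into the corresponding identity of formal series. The only cosmetic difference is that the paper starts from $\Lambda\ast 1=\ell$, writes $\mathcal{S}\Lambda(s)\cdot\zeta(s)=-\zeta'(s)$, and then divides on the right, whereas you start from the equivalent form $\Lambda=\ell\ast\mu$ and multiply directly by $\mathcal{S}\mu(s)=\zeta(s)^{-1}$; the content and the ordering care are identical.
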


\begin{proof} The equality $\Lambda*1=\ell$ gives
$$  \mathcal S \Lambda(s)  . \zeta(s) = \mathcal S (\Lambda*1) (s) =  \mathcal S \ell(s) = \sum_{h \in \mathcal H} e^{-s \ell(h)} \ell(h) h = - \zeta'(s). $$
The result follows from dividing by $\zeta(s)$. \qquad
\end{proof}\\

While the identity  $\Lambda=\ell*\mu$  stems directly from Viennot's bijection between the prime factors of a hike and the walks dividing it, we provide in the next section an alternate proof that uses only the properties of the M\"obius function.\\

Equation \eqref{zprimeoverz}, which is none other than the series version of the identity $\Lambda = \ell* \mu$, coincides with the number theoretic formula of the von Mangoldt series obtained as the logarithmic derivative of the zeta function. The indefinite integral with respect to $s$ yields the logarithmic identity,
\begin{equation}\label{log_zeta} \log \zeta(s) := \int   \frac{\zeta'(s)}{\zeta(s)} ds = \sum_{h \in \mathcal H}e^{-s\ell(h)}\frac{\Lambda(h)}{\ell(h)} h,  \end{equation}
also reminiscent of its number theoretic counterpart, with the length of a hike playing the role of the logarithm of an integer. Further analogies with number theory are discussed in Section~\ref{NumberTheory}.\\

\begin{example}
\end{example}
To illustrate the relation $\Lambda = \ell *\mu$, consider the following graph on 4 vertices:
\begin{figure}[H]
\vspace{-3.5mm}
\centering\includegraphics[width=.3\textwidth]{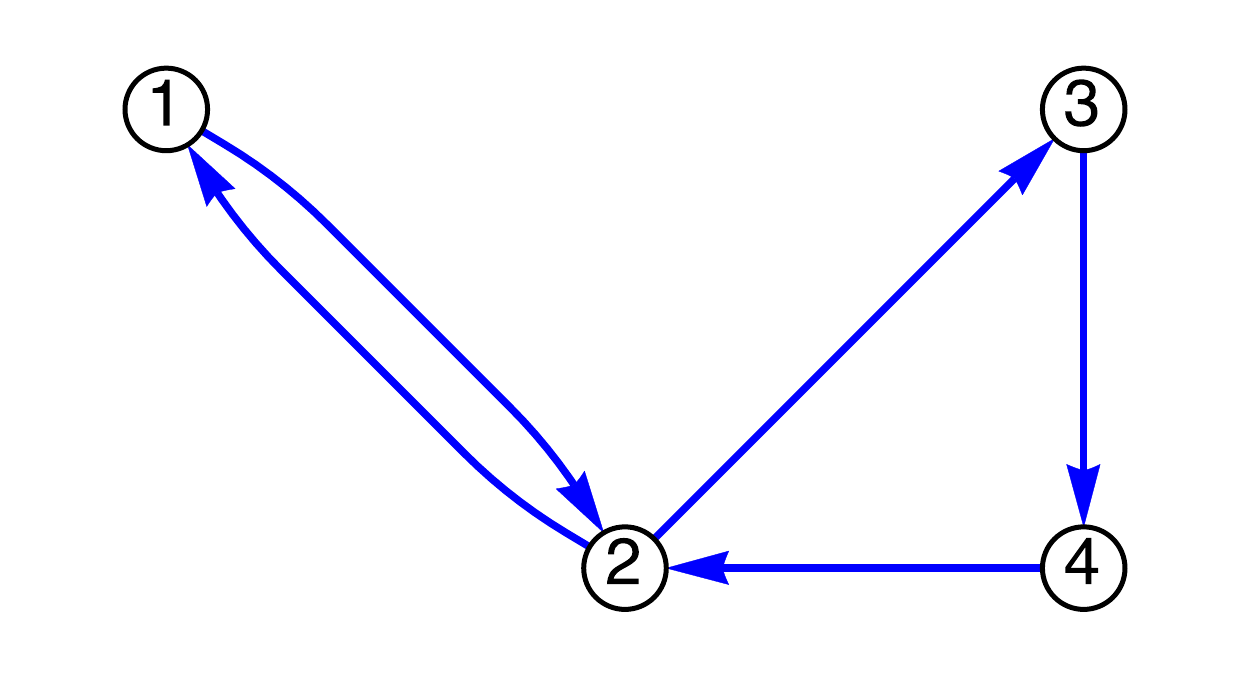}
\vspace{-3mm}
\end{figure} 

\noindent Let $p_1$ be the backtrack and $p_2$ the triangle and let us calculate $\Lambda(p_1p_2)$ and $\Lambda(p_2p_1)$ from $\ell*\mu$. Since the left divisors of $p_1p_2$ are $1,\, p_1$ and $p_1p_2$, we have
\begin{align*}
\Lambda(p_1p_2)&=\ell(1)\mu(p_1p_2)+\ell(p_1)\mu(p_2)+\ell(p_1p_2)\mu(1),\\
&=0\times 0 + 2\times(-1)+5\times 1=3.
\end{align*}
We proceed similarly for $\Lambda(p_2p_1)$:
\begin{align*}
\Lambda(p_2p_1)&=\ell(1)\mu(p_2p_1)+\ell(p_2)\mu(p_1)+\ell(p_2p_1)\mu(1),\\
&=0\times 0 + 3\times(-1)+5\times 1=2.
\end{align*}

\vspace{-2mm}
\noindent Let us now compare these results with a direct calculation of $\Lambda$, by way of counting all the contiguous sequences in the equivalence classes $p_1p_2$ and $p_2p_1$. We find
\begin{align*}
\vspace{-1mm}
w_{21}w_{12}w_{23}w_{34}w_{42}\simeq p_1 p_2,&\qquad w_{12}w_{23}w_{34}w_{42}w_{21}\simeq p_2 p_1,\\
w_{42}w_{21}w_{12}w_{23}w_{34}\simeq p_1 p_2,&\qquad w_{23}w_{34}w_{42}w_{21}w_{12}\simeq p_2 p_1.\\
w_{34}w_{42}w_{21}w_{12}w_{23}\simeq p_1 p_2,&
\end{align*}
This confirms that $\Lambda(p_1p_2) = 3$ and $\Lambda(p_2p_1)=2$, as expected. 
\vspace{2mm}

In number theory, von Mangoldt's explicit formula relates the von Mangoldt Dirichlet series to the zeroes of the zeta function
\begin{equation}\label{RVM}
\psi(x) = \frac 1 {2i\pi} \int_0^\infty \Big( \sum_{n}\frac{\Lambda(n)}{n^s} \Big) \frac{x^s}s ds = x -\!\! \sum_{\rho: \zeta(\rho)=0} \frac{x^\rho}{\rho} - \frac{\zeta'(0)}{\zeta(0)} - \frac{1}{2}\log(1-x^{-2}),
\end{equation}
 for $x>0$ not a prime power. In this expression $\psi(x)=\sum_{p:\,p^k<x} \log(p)$ is the Chebyshev function. Within the framework presented here, Eq.~\eqref{Lambda_trace} and \eqref{zprimeoverz} gives rise to a generalization of this formula, which reduces to counting the closed walks on $G$ from the spectrum of its ordinary adjacency matrix $\mathsf{A}$. 
%Remarkably, we show in Section \ref{NumberTheory} that the number-theoretic Riemann-von Mangoldt explicit formula can be interpreted in this way as well. This suggests that counting walks provides non-trivial information on the primes.

To see this, observe that in the spirit of the number-theoretic approach, we can look for the extension of the explicit formula Eq.~(\ref{RVM}) to hikes by seeking and then relating two expressions of the log-derivative of $\zeta$: i) a form stemming from the Weierstrass factorisation of zeta; and ii) a form involving the primes explicitly. 
The first form is easy to obtain: on finite graphs, the unlabelled zeta function, being $\zeta_{\mathsf{A}}(s) = 1/\det(\mathsf{I}-e^{-s}\mathsf{A})$, factorises over the spectrum $\text{Sp}(G)$ of the graph. Thus, its log-derivative is $N-\sum_{\rho\in\text{Sp}(G)} (1-e^{-s}\rho)$, as expected from Proposition~\ref{LogDerivative}.

The form of the log-derivative of $\zeta$ involving the primes is more difficult to obtain. In the case of the integers, total commutativity implies that it emerges from the (relatively simple) Euler product. The situation is much more complicated on arbitrary graphs, where the logarithm of the zeta function can be shown to be a branched continued fraction over the primes, known as path-sum \cite{giscard2012continued}. While the general expression of a path-sum on any digraph is available, it is cumbersome and requires knowing all the primes individually along with their commutativity relations. For instance, applying Equation (15b) in \cite{giscard2012continued} to the bi-directed cycle graph $C_5$ on five vertices shown on Figure~\ref{C5graph}, all walks from $v_i$ to itself (excluding the trivial walk $1$) are generated by the continued fraction:
\begin{equation*} \frac{1}{1-\frac{a}{1-\frac{c}{1-\frac{e}{1-b}}}-\frac{d}{1-\frac{b}{1-\frac{e}{1-c}}}-\frac{f}{(1-c) \left(1-\frac{e}{1-c}\right)
   \left(1-\frac{b}{1-\frac{e}{1-c}}\right)}-\frac{g}{(1-b) \left(1-\frac{e}{1-b}\right) \left(1-\frac{c}{1-\frac{e}{1-b}}\right)}} -1 .
\end{equation*}
Noting that $C_5$ is vertex-transitive, the series associated with the unlabelled von Mangoldt function then follows as
$$ \mathcal{S}\Lambda_{\mathsf{A}}(s)=\frac{5}{1-\frac{2 e^{-2s}}{1-\frac{e^{-2s}}{1-\frac{e^{-2s}}{1-e^{-2s}}}}-\frac{2 e^{-5s}}{\left(1-e^{-2s}\right)
   \left(1-\frac{e^{-2s}}{1-e^{-2s}}\right) \left(1-\frac{e^{-2s}}{1-\frac{e^{-2s}}{1-e^{-2s}}}\right)}}-5.
$$
Now, since the graph eigenvalues are $2$, $-\frac{1}{2} \left(1+\sqrt{5}\right)$ (twice) and $\frac{1}{2}
   \left(\sqrt{5}-1\right)$ (twice), the equation relating the primes to the spectrum of $G$ stems from the following equality
\begin{align*}
&\hspace{-5mm}\frac{2}{1+\frac{1}{2} \left(1+\sqrt{5}\right) e^{-s}}+\frac{2}{1+\frac{1}{2}
   \left(1-\sqrt{5}\right) e^{-s}}+\frac{1}{1-2 e^{-s}}~=\\
   &\hspace{30mm}\frac{5}{1-\frac{2 e^{-2s}}{1-\frac{e^{-2s}}{1-\frac{e^{-2s}}{1-e^{-2s}}}}-\frac{2 e^{-5s}}{\left(1-e^{-2s}\right)
   \left(1-\frac{e^{-2s}}{1-e^{-2s}}\right) \left(1-\frac{e^{-2s}}{1-\frac{e^{-2s}}{1-e^{-2s}}}\right)}}.
\end{align*}
%Consider the cycle graph shown on the left of Figure~\ref{MangoldtGraph}.  Then, the path-sum formulation of the series associated to the unlabelled von Mangoldt function, $\mathcal{S}\Lambda_{\mathsf{A}}=\mathrm{Tr} \big( (\id - e^{-s}\mathsf A)^{-1}\big) - N$, is
%$$
%4+\mathcal{S}\Lambda_{\mathsf{A}}(s)=\frac{1}{1-\frac{e^{-2s}}{1-e^{-3s}}}+\frac{1}{1-e^{-2s}-e^{-3s}}+\frac{2}{1-\frac{e^{-3s}}{1-e^{-2s}}}.
%$$
%In this expression each term corresponds to a prime, e.g. $e^{-3s}$ stands for oriented triangle $p_2$ of the graph. To make this more explicit, let us present the labelled version of the path-sum: 
%$$
%4+\mathcal{S}\Lambda(s)=\frac{1}{1-\frac{p_1}{1-p_2}}+\frac{1}{1-p_1-p_2}+\frac{2}{1-\frac{p_2}{1-p_1}}.
%$$
%
%

\noindent Extracting the structure of the primes from the unlabelled von Mongoldt series boils down to reconstruct the path-sum from the spectrum alone. 
%Indeed the number, lengths and even commutation relations between primes can easily be read off a path-sum. 
Clearly, this problem is ill-posed: for a given set of eigenvalues, several path-sums may be possible, in which case they describe non-isomorphic graphs with the same non-zero spectrum. This unfortunate defect provides a means of generating cospectral pairs of directed graphs, which we briefly discuss in \S\ref{sec:constr}. 
For graphs that are determined by their spectrum, the path-sum expression should be fully recoverable but we do not know of a systematic procedure to do so. Thus, in all cases, all that can be stated is that the log-derivative of zeta counts the closed walks of the graph and that the explicit formula reduces to performing this count via the spectrum.

%\footnote{This result stems from \cite{giscard2012continued} and will be the object of a future work}. $-->$ le referee 1 n'est pas fan de ce paragraphe. Je suis d'accord que ca manque de precision, on n'ecrit meme pas explicitement la formule de Riemann von Mangoldt de la theorie des nombres et le lien avec le spectre n'est pas du tout explique. En plus ca a l'air super puissant comme remarque mais on n'en fait rien. }\\

\subsection{Totally additive functions on hikes}\label{TotAddSec} The von Mangoldt identity is a particular case of a more general result concerning totally additive functions over hikes. A function $f:\mathcal H \to \mathbb R$ is said to be totally additive if
$$ \forall h,h' \in \mathcal H \ , \ f(h.h') = f(h) + f(h').$$

\begin{proposition}\label{viennot_additive} Let $f$ be a totally additive function over hikes, then 
$$ f*\mu (h) = \left\{ \begin{array}{cl} f(c) & \text{if $h$ is a non-trivial walk, with $c$ its unique prime right-divisor,} \\ 0 & \text{otherwise.} \end{array} \right.  $$ 
\end{proposition}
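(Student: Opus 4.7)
The plan is to mimic the argument sketched in the excerpt for the von Mangoldt identity $\Lambda = \ell \ast \mu$, simply replacing the length function $\ell$ by an arbitrary totally additive $f$. Define a candidate function $g : \mathcal H \to \mathbb R$ by $g(h) = f(c)$ if $h$ is a non-trivial walk with unique prime right-divisor $c$, and $g(h) = 0$ otherwise. Proving Proposition~\ref{viennot_additive} then amounts to showing $g = f \ast \mu$, or equivalently $g \ast 1 = f$, and inverting.

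The core step is to compute $(g \ast 1)(h) = \sum_{d\mid h} g(d)$. Only walk left-divisors $d$ of $h$ contribute, and each such $d$ contributes $f(c(d))$, where $c(d)$ denotes the unique top prime of the pyramid $d$. Here I would invoke Viennot's bijection, recalled in the excerpt just before Proposition~\ref{LogDerivative}: the prime factors of $h$ and the sub-pyramids of its heap are in one-to-one correspondence via $p \mapsto\,\downarrow\!p$, the principal order ideal generated by $p$ in the heap poset. Since left-divisors of $h$ correspond (by Cartier-Foata) exactly to order ideals, and pyramidal order ideals are precisely the principal ones, this specializes to a bijection between the prime factors of $h$ and its walk left-divisors, with the prime $p$ matched to a walk whose top prime is $p$ itself. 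Consequently
\[
(g \ast 1)(h) \;=\; \sum_{\substack{d\mid h\\ d\text{ walk}}} f(c(d)) \;=\; \sum_{p\text{ prime factor of }h} f(p) \;=\; f(h),
\]
the last equality being total additivity of $f$ applied to the prime decomposition of $h$ (the empty case $h=1$ is automatic since $f(1) = 0$ follows from $f(1\cdot 1) = 2f(1)$).

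Having established $g \ast 1 = f$, associativity of Dirichlet convolution together with $1 \ast \mu = \delta$ yields $f \ast \mu = (g \ast 1) \ast \mu = g \ast (1 \ast \mu) = g$, which is exactly the claimed formula.

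The main obstacle is not algebraic but rather making Viennot's bijection precise in the direction we need: that every walk left-divisor of $h$ is of the form $\downarrow\!p$ for a unique prime $p$ of $h$, and that under this identification the walk's unique prime right-divisor is precisely $p$. Once this is clearly stated --- relying on the fact that walks coincide with pyramidal heaps and that left-divisors of a trace correspond to order ideals of its heap --- the rest of the proof is a one-line manipulation, and Proposition~\ref{LogDerivative} is recovered as the special case $f = \ell$.
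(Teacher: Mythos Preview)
Your proof is correct but follows a genuinely different route from the paper's. You define the target function $g$, establish $g \ast 1 = f$ by invoking Viennot's bijection between the prime factors of $h$ and its pyramidal (walk) left-divisors, and then M\"obius-invert. The paper instead computes $f \ast \mu(h)$ directly: it restricts the convolution sum to divisors $d$ whose quotient $h/d$ is self-avoiding, writes $h = h's$ with $s$ the product of all prime right-divisors of $h$, uses total additivity to split $f(h'd) = f(h') + f(d)$ (the $f(h')$ part vanishing against $\sum_{d\mid s}\mu(s/d)=0$), and then a short telescoping over the primes of $s$ collapses the remaining sum to $f(c_1)\,\delta(s/c_1)$. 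In fact the paper itself acknowledges both routes, remarking just after Proposition~\ref{LogDerivative} that the Viennot-bijection argument is the ``direct'' one and that its own computation is offered as an \emph{alternate} proof using only properties of $\mu$. Your approach is therefore more conceptual and shorter once the bijection is taken for granted; the paper's argument is more self-contained, requiring no external combinatorial input beyond the explicit form of $\mu$.
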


\begin{proof} The case $h=1$ follows from noticing that $f$ totally additive implies  $f(1)=0$. Otherwise, write for $h \in \mathcal H \setminus \{1\}$
$$ f*\mu(h) = \sum_{d | h} f(d) \mu \Big( \frac h d \Big).$$
Since $\mu(h/d)$ is zero whenever $h/d$ is not self-avoiding, we may restrict the sum to the divisors $d$ such that $h/d$ is self-avoiding. So, let $c_1,...,c_k$ denote the prime right-divisors of $h$, $s=c_1...c_k$ their product and write $h=h' s$. We get
$$  f*\mu(h) = \sum_{d | h} f(d) \mu \Big( \frac h d \Big) = \sum_{d | s} f(h'd) \mu \Big( \frac s d \Big). $$
The total additivity of $f$ yields 
$$ f*\mu(h) = f(h')  \sum_{d | s}  \mu \Big( \frac s d \Big) +  \sum_{d | s} f(d) \mu \Big( \frac s d \Big)   = \sum_{d | s} f(d) \mu \Big( \frac s d \Big). $$
Now take  the first prime factor $c_1$ of $s$. A divisor of $s$ is either a divisor of $s/c_1$ or $c_1$ times a divisor of $s/c_1$. Thus,
\begin{align*}  \sum_{d | s} f(d) \mu \Big( \frac s d \Big) = & \sum_{d | \frac s{c_1}} \Big[ f(d) \mu \Big( \frac s d \Big) + f(c_1d) \mu \Big( \frac s {c_1d} \Big)  \Big] \\
=& \sum_{d | \frac s{c_1}} \Big[ - f(d) \mu \Big( \frac s {c_1d} \Big) + \big( f(c_1) + f(d) \big) \mu \Big( \frac s {c_1 d} \Big)  \Big] \\
= &  f(c_1)\sum_{d | \frac s{c_1}} \mu \Big( \frac s {c_1 d} \Big) = f(c_1)  \delta \Big( \frac s {c_1} \Big). 
\end{align*}
The final term equals $f(c_1)$ if $s/c_1=1$ (i.e. if $s=c_1$) and zero otherwise. \qquad 
\end{proof}\\

The von Mangoldt identity can be obtained as an application of Proposition \ref{viennot_additive} to the length function $\ell: \mathcal H \to \mathbb N$, which is obviously totally additive. Another notable application concerns a relation between the prime factors counting function $\Omega$ and the indicator function over walks.\\

\begin{corollary}\label{viennot_additive} For all $h \in \mathcal H$, $\Omega*\mu(h) = 1_w(h)$ where $1_w$ is the indicator function over non-trivial walks.\\
\end{corollary}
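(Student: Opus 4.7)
The plan is to reduce the corollary to a direct application of the preceding Proposition (the one establishing $f*\mu$ for totally additive $f$). The main point to verify is that $\Omega$ is totally additive on $\mathcal{H}$: for any two hikes $h,h'$, the prime decomposition of $h.h'$ is obtained by concatenating the prime decompositions of $h$ and $h'$, so $\Omega(h.h') = \Omega(h) + \Omega(h')$. In particular $\Omega(1)=0$, consistent with the convention.

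Once total additivity is established, I invoke the previous proposition with $f = \Omega$. This gives $\Omega*\mu(h) = 0$ whenever $h$ is not a non-trivial walk, and $\Omega*\mu(h) = \Omega(c)$ when $h$ is a non-trivial walk with unique prime right-divisor $c$ (the existence and uniqueness of such $c$ being precisely the pyramid characterization of walks among heaps discussed before Proposition \ref{LogDerivative}). Since $c$ is itself a simple cycle, hence a single prime, its prime decomposition contains exactly one factor, so $\Omega(c) = 1$.

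Combining these two observations yields $\Omega*\mu(h) = 1$ if $h$ is a non-trivial walk and $0$ otherwise, which is exactly $1_w(h)$.

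There is no real obstacle here: the work has been done in the preceding proposition, and the only thing to check carefully is that the contribution $\Omega(c)$ collapses to $1$ because the right-divisor is a prime. A sentence recording total additivity of $\Omega$ and a sentence evaluating $\Omega$ on a prime should suffice for a complete proof.
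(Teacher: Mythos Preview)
Your proof is correct and follows exactly the approach indicated in the paper: apply the preceding proposition to the totally additive function $\Omega$, and observe that $\Omega(c)=1$ since $c$ is prime. The paper also mentions the equivalent formulation $\Omega = 1_w * 1$ (via Viennot's bijection), but your argument is the intended one.
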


The proof is omitted as it is a direct application of Proposition \ref{viennot_additive} to the totally additive function $\Omega$. Alternatively, it follows from the fact that the number of prime factors of a hike $h$ (counted with multiplicity) equals the numbers of non-trivial walks dividing it, i.e. $\Omega = 1_w * 1$.\\

\noindent Observe that a similar relation holds in number theory, namely that for $n \in \mathbb N$,
$$\Omega_\mathbb N* \mu_\mathbb N(n) = \sum_{d | n } \Omega_\mathbb N(d) \mu_\mathbb N \Big( \frac n d \Big) = \left\{ \begin{array}{cl} 1 & \text{ if $n=p^k$ with $p$ prime and $k \in \mathbb N$,} \\
0 & \text{ otherwise.} \end{array} \right. $$
Here, $\Omega_\mathbb N(n)$ equals the number of prime factors (counted with multiplicity) of $n \in \mathbb N$ and $\mu_\mathbb N$ is the number theoretic M\"{o}bius function. This parallel indicates that powers of primes are in fact the ``walks'' of number theory, being the only numbers with a unique prime divisor, ergo a unique prime right-divisor. 
 
\subsection{Totally multiplicative functions on hikes}\label{TotMultSec} A consequence of the M\"{o}bius inversion between $\Lambda$ and $\ell$, $\Lambda=\ell*\mu$, concerns totally multiplicative functions on  hikes $f\in\mathcal{F}$. We say that $f$ is totally multiplicative if 
$$ \forall h, h'\in \H \ , \ f(h.h')=f(h)f(h').$$

\begin{lemma}\label{MultFMu} Let $f$ be a totally multiplicative function. The inverse of $f$ through the Dirichlet convolution is given by 
\begin{equation}\label{MultF} f^{-1} = \mu f: h \mapsto \mu(h) f(h) \ , \ h \in \mathcal H.  
\end{equation}
\end{lemma}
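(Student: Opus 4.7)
The plan is to verify directly that $(\mu f)*f=\delta$ and $f*(\mu f)=\delta$, reducing everything to the two-sided M\"{o}bius identity $\mu*1=1*\mu=\delta$ established (in its left form) by Proposition~\ref{th:mob}.

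First I would dispose of $h=1$. Total multiplicativity applied to $1=1\cdot 1$ forces $f(1)^2=f(1)$, hence $f(1)\in\{0,1\}$, and the case $f(1)=0$ collapses $f$ to the identically zero function, which has no Dirichlet inverse. So we may assume $f(1)=1$, and then $((\mu f)*f)(1)=\mu(1)f(1)\cdot f(1)=1=\delta(1)$, and symmetrically for $f*(\mu f)$.

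The main step is for $h\neq 1$. I would expand
$$((\mu f)*f)(h)=\sum_{d\,|\,h}\mu(d)\,f(d)\,f(h/d),$$
and then use total multiplicativity to collapse $f(d)\,f(h/d)=f\bigl(d\cdot(h/d)\bigr)=f(h)$, which is independent of $d$. Pulling this factor out yields
$$((\mu f)*f)(h)=f(h)\sum_{d\,|\,h}\mu(d)=f(h)\,(\mu*1)(h)=0,$$
by Proposition~\ref{th:mob}. The symmetric computation for $f*(\mu f)$ factors $f(h)$ out in the same way and leaves $f(h)\,(1*\mu)(h)$, which vanishes because $\mu$ is the two-sided inverse of $1$ in $(\mathcal{F},*)$; this in turn follows from the standard fact that left and right inverses in an associative algebra coincide when both exist, together with the observation that both exist in $(\mathcal{F},*)$ for any $g$ with $g(1)$ invertible, since the defining convolution equation can be solved by induction on hike length.

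The hard part, such as it is, lies not in the computation but in recognising what makes it go through: \emph{total} multiplicativity is used crucially to reduce $f(d)\,f(h/d)$ to the single constant $f(h)$ for every left-divisor $d$, regardless of whether $d$ and $h/d$ are vertex-disjoint. A weaker, coprimality-restricted form of multiplicativity (the closer analogue of the arithmetic notion) would not allow the sum to collapse in this fashion, and the identity $f^{-1}=\mu f$ would fail.
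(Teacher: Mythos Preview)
Your argument is correct and follows the same direct computation as the paper: expand $(\mu f)*f(h)$, use total multiplicativity to factor out $f(h)$, and reduce to $\mu*1=\delta$. The paper's proof is terser---it checks only the left identity $(\mu f)*f=\delta$ and omits the discussion of $f(1)$ and of two-sidedness that you (rightly) supply---but the underlying idea is identical.
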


\begin{proof}
By a  direct calculation:
\begin{align*}
 (\mu f) * f (h) &=\sum_{d | h} \mu (d) f(d)  f \Big( \frac hd \Big) = f(h) \sum_{d | h} \mu (d) = \delta(h) 
\end{align*}
ending the proof. \end{proof}\\

Lemma~\ref{MultFMu} is reminiscent of the inverse of totally multiplicative functions in number theory, $f^{-1}(n)=\mu_{\mathbb{N}}(n)f(n)$, $n\geq0$, with $\mu_{\mathbb{N}}$ the number-theoretic M\"{o}bius function. The relation between these two results is explained in Section~\ref{NumberTheory}.\\

\begin{corollary}\label{TotMult} 
Let $f$ be a totally multiplicative function on hikes, $F(s)=\mathcal{S}f(s)$  and $F'(s)=dF(s)/ds$. Then,
$$
\frac{F'(s)}{F(s)}=-\sum_{h\in\H}e^{-s\ell(h)} \Lambda(h) f(h) h.
$$
\end{corollary}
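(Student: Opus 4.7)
The plan is to read off the identity from two facts already established: the derivative of a hike-series has a clean expression, and the inverse of $F(s)$ is given by Lemma~\ref{MultFMu}. Concretely, term-wise differentiation gives
\begin{equation*}
F'(s) = -\sum_{h\in\H} e^{-s\ell(h)}\,\ell(h)\,f(h)\,h = \mathcal{S}(-\ell f)(s),
\end{equation*}
while Lemma~\ref{MultFMu} together with the isomorphism between $(\mathcal{F},*)$ and hike-series implies
\begin{equation*}
F(s)^{-1} = \mathcal{S}(\mu f)(s).
\end{equation*}

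Next I would compute $F'(s)/F(s) = F'(s)\cdot F(s)^{-1}$ using the convolution rule $\mathcal{S}p\cdot\mathcal{S}q = \mathcal{S}(p*q)$. This yields $F'(s)/F(s) = \mathcal{S}\big((-\ell f)*(\mu f)\big)(s)$. The key algebraic step is then to evaluate the pointwise formula
\begin{equation*}
\big((-\ell f)*(\mu f)\big)(h) = -\sum_{d\mid h}\ell(d)\,\mu(h/d)\,f(d)f(h/d),
\end{equation*}
and to exploit total multiplicativity, $f(d)f(h/d)=f(d\cdot(h/d))=f(h)$, to pull the factor $f(h)$ out of the sum. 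What remains inside is $\sum_{d\mid h}\ell(d)\mu(h/d) = (\ell*\mu)(h) = \Lambda(h)$, the latter identity being exactly Viennot's Möbius inversion of $\Lambda*1=\ell$ used in the proof of Proposition~\ref{LogDerivative}.

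Combining the two displays gives $F'(s)/F(s) = -\mathcal{S}(\Lambda f)(s)$, which is the claim. There is no real obstacle here: the only subtlety is keeping track of the order of multiplication, since division in the paper's convention is right-multiplication by the inverse, and convolution is non-commutative. However, because total multiplicativity lets $f(h)$ factor symmetrically out of $f(d)f(h/d)$, the order issue is benign and the same argument would work with left-division as well.
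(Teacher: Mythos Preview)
Your proof is correct and follows essentially the same route as the paper: differentiate to get $\mathcal{S}(-\ell f)$, invert via Lemma~\ref{MultFMu} to get $\mathcal{S}(\mu f)$, multiply and use total multiplicativity to factor $f(h)$ out of the convolution, then apply $\ell*\mu=\Lambda$. The only difference is that you spell out the factoring step $f(d)f(h/d)=f(h)$ more explicitly than the paper does.
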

 
\begin{proof} For any function $f$, deriving the formal series $F = \mathcal S f$ produces the formal series of $- \ell f$ in view of
$$  \frac d {ds} \sum_{h\in\H}e^{-s \ell(h)}f(h) h=  - \sum_{h\in\H}e^{-s \ell(h)}\ell(h) f(h) h . $$
Furthermore, if $f$ is totally multiplicative, Lemma \ref{MultFMu} gives $1/ F(s) = \sum_{h \in \mathcal H} e^{-s \ell(h)} \mu(h) f(h) h$. We obtain
\begin{align*}
\frac{F'(s)}{F(s)} &=-\sum_{h\in\H}e^{-s \ell(h)} \ell(h) f(h) h \  . \sum_{h\in\H}e^{-s \ell(h)} \mu (h) f (h) h \\
 &=-\sum_{h\in\H}e^{-s \ell(h)} f(h) (\ell*\mu)(h) h \\
&= -\sum_{h\in\H}e^{-s \ell(h)}f(h)  \Lambda(h) h
\end{align*}
where the last equality follows from Proposition \ref{LogDerivative}.\qquad 
\end{proof}\\

An important extension to MacMahon's master theorem stems from the formal series version of Lemma~\ref{MultFMu}:
$$  \mathcal S f(s)=  \sum_{h\in \H} e^{-s \ell(h)} f(h) h = \frac 1 {\sum_{h\in \mathcal H} e^{-s \ell(h)} \mu(h) f(h) h} . $$
To see this, consider first a weighted version of the graph $G$ where all arcs pointing to a vertex $i$ are given a formal weight $t_i$. The weighted adjacency matrix of this weighted graph is $\mathsf{T}\mathsf{W}$, with $\mathsf{T}$ the diagonal matrix where $\mathsf{T}_{ii}=t_i$.
Now observe that a totally multiplicative function on hikes is completely determined by its value on the primes (since $f(hh')=f(h)f(h')$ regardless of the commutativity of $h$ and $h'$). We may therefore consider the totally multiplicative function which associates any prime $p$ with its weight,  
\begin{equation}\label{FMacMahon}
f(p)= \mathrm{weight}(p)=t_{i_2}\cdots t_{i_{\ell(p)}}t_{i_1}.
\end{equation} 
where $\{i_1,\cdots, i_{\ell(p)}\}$ is the set of vertices visited by $p$. Then, Lemma~\ref{MultFMu} yields 
\begin{align} \mathcal{S}f(0)  = \sum_{h\in \H} f(h) h = \frac 1 {\sum_{h\in \mathcal H} \mu(h) f(h) h } 
%,\nonumber\\
= \frac 1 {\det(\mathsf{I}- \mathsf{T} \mathsf{W})}.\label{MacMahon} 
\end{align}
%where the last equality follows on noting that $f$ is equivalent to a map sending all primes to 1 on the graph with adjacency matrix $\mathsf{T} \mathsf{A}$.
This is the non-commutative generalization of MacMahon's theorem discovered by Cartier and Foata \cite{cartier1969}. MacMahon's original result \cite{MacMahon1915} is then recovered upon replacing $\mathsf W$ by the adjacency matrix $\mathsf A$, thus attributing the value $1$ to every hike.\\

In general, totally multiplicative functions on hikes do not have to take on the extremely restricted form of Eq.~(\ref{FMacMahon}). In these cases Lemma~\ref{MultFMu} goes beyond even the non-commutative generalization of MacMahon's theorem. We now present an explicit example illustrating this observation where Lemma~\ref{MultFMu} yields the permanent in relation with a simple totally multiplicative function. \\

In number theory, the Liouville function is defined as $\lambda(n)=(-1)^{\Omega(n)}$, where we recall $\Omega(n)$ is the number of prime factors of the positive integer $n$, counted with multiplicity. We define the walk Liouville function similarly.

\begin{definition}\label{wlambda}
The walk Liouville function $\lambda(h):~\H\to\{-1,1\}$ is defined by $\lambda(h):=(-1)^{\Omega(h)}$, where $\Omega(h)$ is the number of prime factors of $h$, counted with multiplicity.\\
\end{definition}

\noindent The series $\mathcal{S}\lambda(s):=\sum_{h\in\H}e^{-s\ell(h)}\lambda(h) h$ associated to the walk Liouville function has a remarkably simple expression showing that calculating it is $\#$P-complete on arbitrary graphs.
\begin{proposition}\label{LiouvillePerm}
The formal series of the walk Liouville function $\lambda$ satisfies
$$
\mathcal{S}\lambda(s) =\sum_{h\in\H}e^{-s\ell(h)} (-1)^{\Omega(h)} h =  \frac{1}{\perm(\mathsf{I}+e^{-s} \mathsf{W})},
$$
where $\mathrm{perm}$ designates the permanent.\\
\end{proposition}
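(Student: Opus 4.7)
The plan is to recognise $\lambda$ as a totally multiplicative function, invoke Lemma~\ref{MultFMu} to write $\mathcal{S}\lambda(s)$ as the reciprocal of the series of $\mu\lambda$, simplify that series by exploiting the supports of $\mu$ and $\lambda$, and finally identify the resulting formal series with the permanent of $\mathsf{I}+e^{-s}\mathsf{W}$ by a direct combinatorial expansion.

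Concretely, I would first observe that $\Omega$ is totally additive ($\Omega(h h')=\Omega(h)+\Omega(h')$ for all $h,h'\in\H$, by definition of the prime decomposition), so that $\lambda(hh')=(-1)^{\Omega(hh')}=\lambda(h)\lambda(h')$, i.e.\ $\lambda$ is totally multiplicative. Lemma~\ref{MultFMu} then gives $\lambda^{-1}=\mu\lambda$, whence
$$
\mathcal{S}\lambda(s)=\frac{1}{\mathcal{S}(\mu\lambda)(s)}=\frac{1}{\sum_{h\in\H} e^{-s\ell(h)}\,\mu(h)\lambda(h)\,h}.
$$
Next, I would note that by Proposition~\ref{th:mob}, $\mu(h)$ vanishes unless $h$ is self-avoiding, and on a self-avoiding hike $\mu(h)=(-1)^{\Omega(h)}=\lambda(h)$; therefore $\mu(h)\lambda(h)=\mathbf{1}[h \text{ self-avoiding}]$. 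Consequently
$$
\mathcal{S}(\mu\lambda)(s)=\sum_{h\text{ self-avoiding}} e^{-s\ell(h)}\,h.
$$

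The remaining, and main, step is to show that this last sum equals $\perm(\mathsf{I}+e^{-s}\mathsf{W})$. Expanding the permanent by its defining formula gives
$$
\perm(\mathsf{I}+e^{-s}\mathsf{W})=\sum_{\sigma\in S_N}\prod_{i=1}^{N}\bigl(\delta_{i,\sigma(i)}+e^{-s}w_{i,\sigma(i)}\bigr).
$$
For each $\sigma$ I would expand the product over the set $\mathrm{Fix}(\sigma)$ of fixed points, so that a contribution corresponds to a pair $(\sigma, S)$ with $S\subseteq\mathrm{Fix}(\sigma)$, each $i\in S$ contributing the self-loop factor $e^{-s}w_{ii}$, each fixed point outside $S$ contributing $1$, and each non-trivial cycle of $\sigma$ contributing the product of arc variables around it, weighted by $e^{-s}$ to the power of its length. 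The crux is to identify these data with self-avoiding hikes: the non-trivial cycles of $\sigma$ together with the self-loops indexed by $S$ form a collection of pairwise vertex-disjoint simple cycles, which multiply to a self-avoiding hike (commutation is automatic), and conversely every set of vertex-disjoint simple cycles arises from a unique such pair $(\sigma,S)$, the uncovered vertices becoming fixed points of $\sigma$ outside $S$. Under this bijection the total weight of $(\sigma, S)$ is exactly $e^{-s\ell(h)} h$, so summing yields $\sum_{h\text{ self-avoiding}} e^{-s\ell(h)} h$, completing the identification.

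The only delicate point I anticipate is verifying the bijection cleanly: one must check that the non-trivial cycles of a permutation are in one-to-one correspondence with the simple cycles of length $\geq 2$ in $G$ on the same vertex set (each cyclic order on a vertex set $\{i_1,\dots,i_k\}$ gives a unique element of $\Sigma_{\H}$ via $w_{i_1 i_2}\cdots w_{i_k i_1}$, well defined because the starting vertex of a simple cycle is immaterial), and that the self-loops contributed by $S$ are compatible with the length-one primes. Once this combinatorial dictionary is recorded, substituting back into $\mathcal{S}\lambda(s)=1/\mathcal{S}(\mu\lambda)(s)$ yields the claimed formula.
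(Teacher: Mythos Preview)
Your proof is correct and follows the same line as the paper's own argument: use total multiplicativity of $\lambda$ and Lemma~\ref{MultFMu} to get $\lambda^{-1}=\mu\lambda=|\mu|$, then identify $\mathcal{S}|\mu|(s)$ with $\perm(\mathsf{I}+e^{-s}\mathsf{W})$. The paper simply asserts the last identification, whereas you spell out the permutation/self-avoiding-hike bijection explicitly, which is a welcome addition but not a different approach.
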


\begin{proof}
Observe that since $\Omega(h)$ is totally additive, the walk Liouville function is totally multiplicative. A direct application of Lemma \ref{MultFMu} then gives
$$  \lambda^{-1}(h)  = \mu(h) \lambda(h) = \left\{ \begin{array}{cl} 1 & \text{ if $h$ is self-avoiding,} \\ 0 & \text{ otherwise.} \end{array} \right.   $$
The result follows from noticing that $\perm(\mathsf{I}+e^{-s} \mathsf{W}) = \underset{h \text{ self-avoiding}}{\sum} e^{-s \ell(h)} h$. \qquad
\end{proof}\\

The permanent of $\id+e^{-s}\mathsf{W}$ is the series associated to the indicator function on self-avoiding hikes, that is, the absolute value of $\mu$:
$$\perm(\id+e^{-s}\mathsf{W})=\sum_{h\in\H}e^{-s\ell(h)}|\mu(h)| h= \mathcal{S}|\mu|(s). $$
Hence, the walk Liouville function $\lambda$ is the inverse of $|\mu|$ through the Dirichlet convolution, similarly as its number theoretic counterpart.

\subsection{Relation to number theory}\label{NumberTheory} The unique factorization of hikes into products of hikes satisfying the prime property is reminiscent of the fundamental theorem of arithmetic. The  difference between these two results stems from the non-commutativity of the product operation between hikes. Unsurprisingly then, on a graph where all prime cycles commute, the prime factorization of hikes identifies with that of the integers and the poset $P_{G}$ becomes isomorphic to the poset of integers ordered by divisibility, which we denote $P_{\mathbb{N}}$.\\

%Although $G_\mathbb N$ is infinite, there is no obstacles in building its hike poset $P_{G_\mathbb N}$. Precisely, $P_{G_\mathbb N}$ is a poset  generated by a countable number of pair-wise commuting primes and ordered by division. \\

\begin{theorem}\label{NumberTheoryOnGN} Let $G$ be an infinite directed graph formed by a countable union of vertex-disjoint oriented cycles. Then, the hike poset  $P_{G}$ is isomorphic to the poset $P_{\mathbb{N}}$ of natural integers ordered by divisibility. In particular, the reduced incidence algebra of $P_{G}$, $(\mathcal{F},\ast)$, is isomorphic to the algebra of Dirichlet series equipped with ordinary multiplication. \\
%The zeta and M\"{o}bius functions are sent by this isomorphism to 
%\begin{align*}
%\zeta&\longrightarrow\zeta_R(s)=\sum_{n}\frac{1}{n^s},\\
%\mu&\longrightarrow\frac 1 {\zeta_R(s)}=\sum_{n}\frac{\mu_{\mathbb{N}}(n)}{n^s},
%\end{align*}
%with $\zeta_R(s)$ the Riemann zeta function and $\mu_{\mathbb{N}}(n)$ the number-theoretic M\"{o}bius function.\\
\end{theorem}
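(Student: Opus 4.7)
The plan is to construct an explicit poset isomorphism $\phi: P_G \to P_{\mathbb{N}}$ and then invoke the classical identification between arithmetic functions under Dirichlet convolution and formal Dirichlet series under ordinary multiplication.

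First, I would identify the prime alphabet of $\H$. Since $G$ is the countable disjoint union of vertex-disjoint oriented cycles, the only simple cycles of $G$ are precisely those oriented cycles; enumerate them $c_1, c_2, \ldots$. By hypothesis every pair $(c_i, c_j)$ with $i \ne j$ is vertex-disjoint, so Equation \eqref{dephike} asserts that any two distinct primes of $\H$ commute. Consequently $\H$ is the free \emph{commutative} monoid on the alphabet $\{c_i\}_{i \ge 1}$, and Proposition \ref{th:mob} (together with the discussion of prime factorization following it) gives every hike a unique representation $h = c_1^{a_1} c_2^{a_2} \cdots$ with finitely many nonzero exponents $a_i \in \mathbb{N}$.

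Second, I would fix the enumeration $p_1 < p_2 < \cdots$ of the rational primes and define $\phi: \H \to \mathbb{N}$ by
$$ \phi\bigl(c_1^{a_1} c_2^{a_2} \cdots\bigr) := p_1^{a_1} p_2^{a_2} \cdots . $$
The fundamental theorem of arithmetic together with the unique factorization on $\H$ makes $\phi$ a bijection. It then remains to check that $d \mid h$ in $P_G$ if and only if $\phi(d) \mid \phi(h)$ in $P_{\mathbb{N}}$. The only real subtlety, and the step I expect to be the main obstacle, is arguing that under full commutativity the notion of \emph{left}-divisibility on $\H$ collapses to componentwise dominance of exponent sequences; once this is in place the equivalence with ordinary divisibility in $\mathbb{N}$ is immediate.

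Finally, I would transport the conclusion to the reduced incidence algebras. The poset isomorphism $\phi$ induces an algebra morphism $\Phi: (\mathcal F, *) \to (\mathcal F(\mathbb{N}), \ast_{\mathbb{N}})$ via $\Phi(f) := f \circ \phi^{-1}$, and it intertwines the two Dirichlet convolutions because summation over left-divisors on $\H$ is carried by $\phi$ to summation over divisors on $\mathbb{N}$. The last step is then standard: sending each generator $c_i$ to the function $n \mapsto n^{-s}|_{n=p_i}$ realizes the algebra of arithmetic functions as the algebra of formal Dirichlet series under ordinary multiplication, which gives the second assertion of the theorem.
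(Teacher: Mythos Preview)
Your proposal is correct and follows essentially the same route as the paper: both enumerate the vertex-disjoint cycles, map $c_j$ to the $j$-th rational prime, observe that full commutativity of the primes makes $\H$ a free commutative monoid so that the map is a monoid isomorphism (hence a poset isomorphism for divisibility), and then invoke the standard identification of the reduced incidence algebra of $P_{\mathbb{N}}$ with formal Dirichlet series. The paper is slightly terser about the divisibility step and simply cites \cite{doubilet1972} for the final identification, but the argument is the same.
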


\begin{proof}
Let $c_1,c_2,...$ be an enumeration of the simple cycles of $G$. Define $\varphi:\H \longrightarrow \mathbb{N}$ as the function mapping $c_j $ to the $j$-th prime number $p_j$ for all $j \in \mathbb N$ and such that
$$ \forall h,h' \in \mathcal H \ , \ \varphi(h.h') = \varphi(h) \times \varphi(h').  $$
Since $(\mathcal H,.)$ is Abelian, $\varphi$ is one-to-one and more importantly, a  monoid homomorphism from $(\H,.) $ to $(\mathbb{N},\times)$. Consequently, $P_G$ is isomorphic to $ P_{\mathbb{N}}$ and their reduced incidence  algebras are isomorphic as well. Finally, the  reduced incidence algebra of $P_{\mathbb{N}}$ is known to be isomorphic to the algebra of Dirichlet series, see e.g. Example 4.8 on page 282 in \cite{doubilet1972}.\qquad 
\end{proof}\\

In this context, \emph{all results} obtained in Section~\ref{AlgHike} yield valid number theoretic results when applied to the infinite digraph $G$ composed of a countable union of vertex-disjoint simple cycles. For example, on such digraph $G$:

\begin{itemize}
	\item Two simple cycles are vertex-disjoint if, and only if, they are different. Thus, the M\"obius function on $\mathcal H$ coincides with the number theoretic M\"obius function. More generally, the notion of co-primality which extends over hikes to vertex-disjointness is here equivalent to the usual definition of co-primality over $\mathbb N$. 
	\item All prime factors of a hike are also divisors. It follows that $\tau(h) := 1*1(h) $ and $\omega(h):= 1_p*1(h)$ (where  $1_p$ is the prime indicator function) give respectively the number of divisors and the number of prime factors of $h$, similarly as for the arithmetic versions of these functions.
	\item Closed walks take the form $h=p^k$,  for $p$ a prime hike (i.e. a simple cycle) and $k \in \mathbb N$. Thus, Definition \ref{wLambda} yields the von Mangoldt function
$$
\Lambda(h) = \begin{cases}
\ell(p),&\text{if }h=p^k,~p\text{ prime}\\
0,&\text{otherwise}.\end{cases}
$$
This recovers the number-theoretic von Mangoldt function, provided we identify the length of a hike with the logarithm of an integer.
\item More generally, Proposition \ref{viennot_additive} shows that, for any totally additive function $f$, $f*\mu$ has its support on non-trivial walks. On a digraph $G$ where all primes commute, the walks are the powers of primes thus recovering the number theoretic version of the result: for all $f: \mathcal H \to \mathbb R$ totally additive,
$$ f*\mu(h) = \sum_{d | h} f(d) \mu \Big( \frac h d \Big) = \left\{ \begin{array}{cl} f(p) & \text{ if } h = p^k,~p \text{ prime} \\ 0 & \text{ otherwise.} \end{array} \right. $$
\end{itemize}

\section{Relation to the Ihara zeta and the characterisation of graphs}\label{IharaZeta} The Ihara zeta function plays an important role in algebraic graph theory and network analysis as it was shown to relate to \textit{some} properties of the graph \cite{Terras2011}. In this section, we elucidate the relation between the zeta function of the poset of hikes ordered by divisibility and the Ihara zeta function. We then show that the poset $P_G$ and its zeta function $\zeta(s)$ determine undirected graphs uniquely, up to isomorphism.
\vspace{2mm}

\subsection{Ihara zeta function} The basic objects underlying the Ihara zeta function are certain equivalence classes defined over the closed walks of a graph, called primitive orbits \cite{Terras2011}. We begin by recalling basic definitions pertaining to the primitive orbits.
\vspace{2mm}

Two closed walks are said to be equivalent if one can be obtained from the other upon changing its starting point and deleting its immediate backtracks, e.g. $w_{12}w_{23}w_{34}w_{43}w_{31}\simeq w_{23}w_{31}w_{12}$. The resulting equivalence classes on the set of all walks are called \textit{backtrackless orbits}.\footnote{Backtrackless orbits are necessarily connected and may still have one or more backtracks as prime factors.} An orbit is \textit{primitive} if and only if it is not a perfect power of another orbit, i.e. $p_o \not\simeq p_o'^k$, $k>1$. 
The Ihara zeta function is then defined in analogy with the Euler product form of the Riemann zeta function as 
\begin{equation*}
\zeta_I(u):=\prod_{\tilde{p}_o\in \tilde{C}_G}\frac{1}{1-u^{\ell(\tilde{p}_o)}},
\end{equation*}
where $\tilde{C}_G$ is the set of backtrackless primitive orbits on $G$. 
In the following it will be convenient to consider orbits for which immediate backtracks have been retained. In this case, two walks represent the same orbit if and only if one can be obtained from the other upon changing its starting point. In this situation $w_{12}w_{23}\underline{w_{34}w_{43}}w_{31}$ and $w_{12}w_{23}w_{31}$ define different (primitive) orbits. We denote by $C_G$ the set of primitive orbits including those with immediate backtracks.
\vspace{2mm}

Primitive orbits \textit{do not obey the prime property}, that is a primitive orbit may be a factor of the product of two walks $w.w'$ without being a factor of $w$ or $w'$ and the factorization of walks into products of primitive orbits is not unique. We further note that counting primitive orbits is indeed much easier than counting prime cycles.\footnote{In contrast, just determining the existence of a prime cycle of length $n$ on a graph on $n$ vertices is known to be NP-complete, being the Hamiltonian cycle problem.}\\
\begin{proposition}\label{PifromTr}
Let $G$ be a graph and let $\pi_{C_G}(\ell)$ be the number of primitive orbits of length $\ell$ on $G$ with immediate backtracks retained. Then
\begin{equation}\label{ExactPi}
\pi_{C_G}(\ell)=\frac{1}{\ell}\sum_{n|\ell}\mu_{\mathbb{N}}(\ell/n)\,\mathrm{Tr}(\mathsf{A}^n),
\end{equation}
where $\mu_{\mathbb{N}}$ is the number theoretic M\"{o}bius function and $\mathsf{A}$ is the adjacency matrix of $G$.\\
\end{proposition}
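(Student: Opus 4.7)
The plan is to establish the counting identity $\mathrm{Tr}(\mathsf{A}^n)=\sum_{m\mid n} m\,\pi_{C_G}(m)$ and then apply number-theoretic M\"obius inversion. The starting observation is that $(\mathsf{A}^n)_{ii}$ counts closed walks of length $n$ based at $v_i$, so $\mathrm{Tr}(\mathsf{A}^n)$ is the total number of closed walks of length $n$ with a distinguished starting vertex, or equivalently, pairs consisting of a contiguous arc sequence of length $n$ together with a marked initial position.

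To evaluate this total, I would classify each such closed walk $w$ by the unique primitive orbit (with immediate backtracks retained) from which it arises. Concretely, regarding $w$ as a cyclic sequence of $n$ arcs, there is a unique $m\mid n$ and a unique primitive orbit $p_o\in C_G$ of length $m$ such that $w$ is a cyclic rotation of $p_o^{n/m}$; here $m$ is the minimal period of the cyclic arc sequence of $w$, and $p_o$ is represented by that period block. Existence follows by setting $m$ to be the minimal period, and uniqueness follows from the minimality of $m$ together with the definition of primitivity.

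The key combinatorial step is then to count, for a fixed primitive orbit $p_o$ of length $m\mid n$, how many distinguished-start closed walks of length $n$ correspond to it. Since $p_o$ is primitive, its arc sequence has no nontrivial cyclic self-symmetry, so the arc sequence of $p_o^{n/m}$ has minimal period exactly $m$; hence only $m$ of the $n$ cyclic shifts produce distinct closed walks, each giving a valid choice of base-point. Summing over $m\mid n$ yields
$$ \mathrm{Tr}(\mathsf{A}^n) \;=\; \sum_{m\mid n} m\,\pi_{C_G}(m). $$

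Finally, I would apply the classical M\"obius inversion formula on $\mathbb{N}$ to this relation, obtaining $\ell\,\pi_{C_G}(\ell)=\sum_{n\mid \ell}\mu_{\mathbb{N}}(\ell/n)\,\mathrm{Tr}(\mathsf{A}^n)$, which rearranges to \eqref{ExactPi}. The main obstacle is the third step: one must rigorously justify that a power $p_o^{n/m}$ of a primitive orbit has precisely $m$ distinct representatives as length-$n$ closed walks with a marked starting vertex, rather than $n$, since this is exactly where the \emph{primitivity} assumption is used and the rest of the argument is otherwise formal.
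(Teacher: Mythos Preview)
Your proof is correct and rests on the same combinatorial core as the paper's: both arguments hinge on the fact that a primitive orbit $p_o$ of length $m$ contributes exactly $m$ distinct based closed walks of length $n$ via $p_o^{n/m}$, yielding $\mathrm{Tr}(\mathsf{A}^n)=\sum_{m\mid n} m\,\pi_{C_G}(m)$, after which M\"obius inversion finishes.

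The only difference is packaging. The paper reaches the same divisor identity by first establishing the product formula $\det(\mathsf{I}-u\mathsf{A})=\prod_{j\geq 1}(1-u^j)^{\pi_{C_G}(j)}$ (itself derived from the von Mangoldt relation $\Lambda(p_o^k)=\ell(p_o)$ developed just before the proof), then taking logarithms and equating coefficients. Your route is more self-contained and bijective, avoiding the generating-function detour; the paper's route is shorter in the local proof because it reuses the machinery of \S\ref{MangoldtSec}. Substantively they are the same argument, and your careful justification of why $p_o^{n/m}$ has exactly $m$ (rather than $n$) based representatives is precisely the point the paper encodes in the line ``there are $\ell(p_o)$ walks in the equivalence class $p_o^k$.''
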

\begin{remark} 
\end{remark}
A similar result already exists for backtrackless primitive orbits, in this case $\mathsf{A}$ is replaced by the directed edge-adjacency matrix, see \cite{Terras2011}.\vspace{2mm}

Before we prove Proposition \ref{PifromTr}, it is instructive to relate the zeta function $\zeta(s)$ of $P_G$ to the Ihara zeta function. We start with Eq.~\eqref{log_zeta},
$$\log \zeta(s)= \!\!\sum_{h\neq1} e^{-s\ell(h)}\frac{\Lambda(h)}{\ell(h)}\,h.$$ 
Recall that $\Lambda(h)$ is non-zero only if $h$ is a walk. Furthermore, a walk either defines a primitive orbit or is a power of one, $h= p_o^k$, $k\geq1$, where we write $p_o$ for a primitive orbit in order to avoid confusion with primes. Then we can recast Eq.~\eqref{log_zeta} as
\begin{equation}\label{formallog1}
\log \zeta(s) =  \!\!\sum_{p_o\in C_G}\sum_{k>0} e^{-s\ell(p_o^k)}\frac{\Lambda(p_o^k)}{\ell(p_o^k)}\,p_o^k,
\end{equation}
with $C_G$ the set of primitive orbits on $G$ (including those with immediate backtracks). There are $\ell(p_o)$ walks in the equivalence class $p_o^k$ since two walks are equivalent if and only if one can be obtained from the other upon changing its starting point. Then $\Lambda(p_o^k)=\ell(p_o)$ and Eq.~(\ref{formallog1}) gives
\begin{align}\label{formallog}
\log\zeta(s) &= \sum_{p_o\in C_G}\sum_{k>0} \frac{1}{k}e^{-s k\ell(p_o)} p_o^k,
\end{align}
Exponentiating the series above necessitates some precautions:  being hikes, primitive orbits do not commute $p_o p'_o\neq p'_o p_o$ as soon as $p_o$ and $p'_o$ share at least one vertex. We will present the result of this exponentiation in a future work as it is sufficient for the purpose of relating $\zeta$ with $\zeta_I$ to bypass this difficulty by eliminating all formal variables. This is equivalent to substituting $\mathsf{W}$ with $\mathsf{A}$ in Eq.~(\ref{formallog}). 
This procedure immediately yields
\begin{equation*}
\zeta_{\mathsf{A}}(s):=\frac{1}{\det(\mathsf{I}-e^{-s}\mathsf{A})}=\prod_{p_o\in C_G}\frac{1}{1-e^{-s\ell(p_o)}},
\end{equation*}
this being an Abelianization of $\zeta(s)$. 
Defining $u:=e^{-s}$ now gives
\begin{equation}\label{zetaprimitive}
\zeta_{\mathsf{A}}(u)=\frac{1}{\det(\mathsf{I}-u\mathsf{A})}=\prod_{p_o\in C_G}\frac{1}{1-u^{\ell(p_o)}}.
\end{equation}
We separate the product above into a product over primitive orbits with no immediate backtracks, yielding the Ihara zeta function, and the product $\zeta_b(u):=\prod_{p_b\in C_G}(1-u^{\ell(p_b)})^{-1}$, involving primitive orbits $p_b$ with at least one immediate backtrack. This yields
\begin{equation*}
\zeta_{\mathsf{A}}(u)=\zeta_I(u)\zeta_b(u),
\end{equation*}
which indicates that the Ihara zeta function originates from the unlabeled, Abelianized version $\zeta_{\mathsf{A}}$ of the zeta function $\zeta(s)$. Thus, we may expect $\zeta(s)$ or $P_G$ to hold more information on the graph $G$ than the Ihara zeta function does. In the next section we prove that this is indeed the case as $P_G$ determines undirected graphs uniquely.\\

\begin{proof}[Proof of Proposition~\ref{PifromTr}]
Starting from Eq.~(\ref{zetaprimitive}) we have
$
\det(\mathsf{I}-u\mathsf{A})=\prod_{j=1}^{\infty}(1-u^j)^{\pi_{C_G}(j)}.
$
Taking the logarithm on both sides yields
$$
\sum_{i=1}^\infty\frac{1}{i}\,u^i\, \mathrm{Tr}\, \mathsf{A}^i=\sum_{j=1}^\infty \pi_{C_G}(j)\sum_{k=1}^\infty\frac{u^{kj}}{k},
$$
and the result follows upon equating the coefficients of $z^\ell$ on both sides. Remark that the  product expansion of $\zeta_{\mathsf{A}}$ over the primitive orbits also yields the following expansion for the ordinary resolvent $\mathsf{R}(u)=(u\mathsf{I}-\mathsf{A})^{-1}$ of $G$,
\begin{equation}\label{ResolventLambert}
u^{-1}\,\mathrm{Tr}\,\mathsf{R}(u^{-1})=N+\sum_{\ell\geq 1}\ell\,\pi_{C_G}(\ell)\frac{u^{\ell}}{1-u^\ell},
 \end{equation}
where $\pi_{C_G}(\ell)$ is the number of primitive orbits of length $\ell$ on $G$. We recognize the Lambert series of general term $\ell\,\pi_{C_G}(\ell), \ell \geq 1$. This follows from Eq.~\eqref{log_zeta} together with Eq.~\eqref{zetaprimitive} for $\zeta_\mathsf{A}(u)$.\qquad
\end{proof} 
\vspace{2mm}

\subsection{Characterization of undirected graphs by the hike poset}\label{PGdetermines} We recall that $P_G = (\mathcal H,\prec)$ denotes the poset of hikes $\mathcal H$ partially ordered by left division: $h \prec h' \Longleftrightarrow h | h'$. Observe that the minimal elements of $P_G$ are the simple cycles and two simple cycles $c,c'$ commute if and only if there exists $h$ such that $c \prec h$ and $c' \prec h$. Thus, knowing the poset $P_G$ reduces exactly to knowing the simple cycles and their commutativity relations (i.e. the pairs of intersecting cycles). We now aim to characterize the precise information on a digraph $G$ that is contained in its hike poset $P_G$. \\

Let $\mathsf A$ denote the adjacency matrix of $G$. Replacing $\mathsf W$ by $z \mathsf A$, for $z$ a formal variable, in Eq. \eqref{mobius_inv} yields a slight modification of the characteristic polynomial of $G$ by
$$ \det (\id - z \mathsf A) = \sum_{h \in \mathcal H} \mu(h) z^{\ell(h)}.  $$
This expression indicates that knowing the poset $P_G$, along with the length function $\ell: P_G \to \mathbb N$, suffices to recover the spectrum of a digraph $G$ of known size $N$. On the other hand, the information given by the hike structure $(P_G,\ell)$ is not sufficient to reconstruct $G$ exactly. For instance, all acyclic digraphs on $N$ vertices have their hike posets reduced to $\{1\}$ and are therefore indistinguishable from their hike structure. Actually, non-trivial examples of co-spectral digraphs can be obtained from seeking non-isomorphic digraphs with the same hike structure $(P_G,\ell)$, as we discuss in Section \ref{sec:constr}. \\

Things are somewhat different for undirected graphs which are in fact characterized by their hike structure, provided they contain no isolated vertex, as we prove below. Because hikes are defined from directed edges, let us clarify that we define the hike structure of an undirected graph as that of its bi-directed version, for which each undirected edge is replaced by two arcs of opposite directions. Since the connected components of $G$ are apparent in $P_G$, we shall assume that $G$ is connected without loss of generality.\\

\begin{proposition} Let $G$ be a connected bi-directed digraph. Then, the hike structure $(P_G,\ell)$ determines $G$ uniquely up to isomorphism.\\
\end{proposition}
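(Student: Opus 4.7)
The plan is to reconstruct the underlying undirected graph $\hat{G}$ of $G$ from $(P_G,\ell)$ by extracting its line graph $L(\hat{G})$ and then invoking Whitney's edge-reconstruction theorem. First, I would identify the simple cycles (the primes of $P_G$) as the minimal elements of $P_G\setminus\{1\}$. Using $\ell$, the length-$2$ primes pick out exactly the backtracks $w_{ij}w_{ji}$; since $G$ is bi-directed, these are in canonical bijection with the edges of $\hat{G}$. (I assume $\hat{G}$ is loopless for clarity; any self-loops would appear as length-$1$ primes and can be handled separately.)

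Second, I would recover the independence relation $I_{\mathcal{H}}$ of \eqref{dephike} from the poset structure alone. The key observation is that two distinct primes $c,c'$ commute if and only if they have a common upper bound in $P_G$. The ``only if'' direction is immediate by taking $h=cc'=c'c$. For the converse, if both $c$ and $c'$ left-divide some $h\in\mathcal{H}$, then they both belong to the first clique in the Cartier--Foata decomposition of $h$ (the maximal self-avoiding left-divisor $s(h)$), and cliques consist of pairwise-commuting primes; hence $V(c)\cap V(c')=\emptyset$. Restricting this criterion to backtracks, non-commutation of $b=w_{ij}w_{ji}$ and $b'=w_{kl}w_{lk}$ is equivalent to the edges $\{i,j\}$ and $\{k,l\}$ sharing a vertex. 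The graph whose vertices are the length-$2$ primes and whose edges join non-commuting pairs is therefore isomorphic to $L(\hat{G})$.

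Third, I would apply Whitney's theorem: a connected simple undirected graph is determined up to isomorphism by its line graph, with the sole exception of the pair $\{K_3,K_{1,3}\}$, both of which have line graph $K_3$. This ambiguity is resolved using $\ell$: the bi-directed $K_3$ admits two length-$3$ primes (the oriented triangles), whereas bi-directed $K_{1,3}$, being acyclic, has no prime of length $\geq 3$. The presence or absence of a length-$3$ prime in $P_G$ thus distinguishes the two cases, and $\hat{G}$---and hence $G$---is reconstructed up to isomorphism. The main obstacle is the rigorous detection of prime commutativity from the bare poset in Step 2, handled by the first-clique argument above; Step 3 then reduces to the classical Whitney line-graph reconstruction theorem.
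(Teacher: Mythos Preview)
Your proof is correct and follows essentially the same route as the paper's: identify the backtracks as the length-$2$ primes, recover the line graph of $\hat{G}$ from their (non-)commutativity, apply Whitney's theorem, and then separate the exceptional pair $\{K_3,K_{1,3}\}$ by looking at the rest of the hike structure. Your explicit justification that prime commutativity is visible in $P_G$ via common upper bounds and the first Cartier--Foata clique, and your concrete use of length-$3$ primes to distinguish $K_3$ from $K_{1,3}$, simply make precise what the paper states more tersely.
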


\begin{proof} Clearly, knowing the length of the hikes allows to identity the backtracks as the minimal elements in $P_G$ of length $2$ (or simply the elements of length $2$ if $G$ has no self-loop). Since two backtracks share a vertex in common if and only if they do not commute, the line graph of $G$ can be recovered from the commutativity relations on the backtracks. The result follows by noting that the bi-directed versions of $K_3$ and $K_{1,3}$ (the only connected undirected graphs not determined by their line graphs, see \cite{whitney1932congruent}) produce different hike posets. \qquad 
\end{proof}

\vspace{2mm}

Of course, the length function provides a significant amount of information and the question remains to know whether an undirected graphs $G$ with no isolated vertex can be reconstructed from the poset $P_G$ only. This turns out to be the case for all undirected graphs but two exceptions: $K_3$, the complete graph on $3$ vertices and $K_{1,5}$, the complete bipartite graph on $1$ and $5$ vertices. \\

\begin{theorem}\label{poset_charac} Let $G$ be a connected bi-directed digraph with no self-loop. The hike poset $P_G$ determines $G$ uniquely up to isomorphism, unless $G \in \{ K_3,K_{1,5} \}$. \\
\end{theorem}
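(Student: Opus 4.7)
The plan is to build on the previous proposition, which showed that $G$ is determined by the pair $(P_G, \ell)$. The idea is to recover from $P_G$ alone enough information to identify the backtracks of $G$, since their mutual commutativities encode the line graph $L(G)$, from which $G$ follows by Whitney's theorem.

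First, I extract from $P_G$ the independence graph $\Gamma(G)$ of $\H$ defined in \eqref{dephike}: its vertices, the simple cycles of $G$, are the minimal non-trivial elements of $P_G$, and two simple cycles commute if and only if they possess a common upper bound in $P_G$. Conversely $\Gamma(G)$ entirely determines the trace monoid $\H$ and hence the poset, so the problem reduces to reconstructing $G$ from $\Gamma(G)$.

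The main step is to distinguish the backtracks from the longer simple cycles intrinsically in $\Gamma(G)$. For any undirected cycle of length $\geq 3$ in $G$, the two oriented simple cycles $c$ and $c^{-1}$ share the same vertex set and hence the same open neighborhood in $\Gamma(G)$; being non-adjacent there, they form a pair of non-adjacent twins. I will argue that, generically, backtracks are precisely those simple cycles that do not belong to any such twin pair. This is the main obstacle: a backtrack may coincidentally become a twin of another backtrack, or even of an oriented cycle, when the commutativity structure is sufficiently degenerate. A case analysis on the number of simple cycles shows that this pathology occurs only when $\Gamma(G)$ is edgeless, i.e.\ $\Gamma(G) = \overline{K_n}$ for some $n$, and that such an edgeless commutation graph forces $G$ to be either a star $K_{1,n}$ (whose $n$ backtracks all share the central vertex) or, only when $n = 5$, the triangle $K_3$ (with three backtracks and two oriented triangles, all pairwise vertex-intersecting). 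Since $K_{1,5}$ and $K_3$ share $\Gamma(G) = \overline{K_5}$ and thus the same hike poset, they account exactly for the two stated exceptions.

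Outside these exceptions the backtracks are identified and the line graph $L(G)$ is recovered at once: its vertices are the backtracks, and two backtracks are adjacent if and only if they do not commute in $\Gamma(G)$. Whitney's theorem then reconstructs $G$ up to isomorphism, the sole classical ambiguity $L(K_3) = L(K_{1,3})$ being resolved since $K_3$ is one of the excluded graphs, while $K_{1,3}$ is singled out as the unique non-excluded graph with $\Gamma(K_{1,3}) = \overline{K_3}$ and no simple cycle of length $\geq 3$.
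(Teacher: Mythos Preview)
Your strategy of recovering the backtracks from the commutation structure and then invoking Whitney's theorem is exactly the paper's strategy. The gap is in your central claim that a backtrack can acquire a non-adjacent twin in $\Gamma(G)$ \emph{only} when $\Gamma(G)$ is edgeless. This is false, and the counterexample is precisely the configuration the paper isolates and treats separately.

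Take $G$ to be a triangle on vertices $\{i,j,k\}$ together with a pendant edge $jm$. The simple cycles are the four backtracks $b_{ij},b_{ik},b_{jk},b_{jm}$ and the two oriented triangles $t,t^{-1}$. Here $b_{ik}$ and $b_{jm}$ are vertex-disjoint, so $\Gamma(G)$ has an edge and is not $\overline{K_n}$. Yet the open neighbourhood of $b_{ij}$ in $\Gamma(G)$ is empty (every other simple cycle meets $\{i,j\}$), and the same holds for $b_{jk}$, $t$ and $t^{-1}$. Thus $b_{ij}$ and $b_{jk}$ are non-adjacent twins of one another and of $t,t^{-1}$; by your criterion neither would be recognised as a backtrack. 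The asserted ``case analysis on the number of simple cycles'' therefore cannot succeed as stated.

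The paper's proof confronts exactly this difficulty. It partitions the simple cycles into twin classes and then counts the backtracks in each class rather than trying to pinpoint them individually. The key observation, which your argument lacks, is a \emph{parity} one: the non-backtrack cycles in a class come in reversal pairs $c,c^{-1}$, so a class of size $k$ contains $k\bmod 2$ backtracks, except possibly when $k=4$. A separate structural lemma then decides the $k=4$ case by inspecting whether some common neighbour of the class has degree exactly $4$ in the dependence graph. Since twin elements are interchangeable in $\gamma$, knowing only \emph{how many} backtracks lie in each class suffices to recover the induced subgraph on backtracks, hence $L(G)$. Your outline can be repaired along these lines, but as written the reduction to the edgeless case is the missing idea.
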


\begin{remark}
\end{remark}
To be able to recover $G$ from its hike poset $P_G$, the additional condition that the graph contains no self-loop is crucial. Indeed, when the length is unknown, a self-loop is indistinguishable from a pendent edge, i.e. an edge with an endpoint of degree one in $G$.\\

Before the proof, let us introduce some definitions. The hike dependence graph $\gamma$ is defined as the undirected graph whose vertices are the simple cycles of $G$, and with an edge between two different vertices $c,c'$ if, and only if, they share at least one vertex (for simplicity, $\gamma$ is defined with no self-loop). The dependence graph $\gamma$ is the complement of the independence graph of $\mathcal H$, which we recall has the simple cycles as vertices and independence relation $\mathcal I_\mathcal H$ as edge set (see Eq. \eqref{dephike}).
%, see for instance \cite{cartier1969,abbes2015graded}. 
Its main role is to provide a visual representation of the commutativity relations between the simple cycles of $G$, so that the poset $P_G$ and $\gamma$ are essentially the same object.\\

Let $c$ be a simple cycle in $G$, the neighborhood of $c$ in $\gamma$ is the set $N_\gamma(c) := \{ c' \neq c : V(c') \cap V(c) \neq \emptyset\}$. We say that $N_\gamma(c)$ is a \textit{clique} if any two elements of $N_\gamma(c)$ are adjacent in $\gamma$.\\

\noindent We introduce a new equivalence relation between cycles. Two simple cycles $c,c'$ are said to be equivalent if they intersect and have the same neighbors in $\gamma$:
$$ c \simeq c' \ \Longleftrightarrow \ N_\gamma(c) \cup \{c\} = N_\gamma(c') \cup \{c'\}. $$
One verifies easily that $\simeq$ is indeed an equivalence relation on the simple cycles.\\

\begin{proof}[Proof of Theorem \ref{poset_charac}] The proof of the theorem relies on recovering the edges of $G$ by identifying the backtracks in the hike dependence graph $\gamma$. In order to do so, we partition the vertices of $\gamma$ into the equivalence classes $[c] := \{c': c \simeq c' \}$ and seek the backtracks in each class. The only ambiguous case turns out to be for $\gamma=K_5$, the complete graph on $5$ vertices.\\

\begin{lemma}\label{step1} Let $G$ be a connected undirected graph and $\gamma$ its hike dependence graph. We assume $\gamma \neq K_5$. If $N_\gamma(c)$ is a clique, then the equivalence class $[c]$ contains only backtracks.\\
\end{lemma}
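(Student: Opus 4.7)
The plan is first to show that the hypothesis ``$N_\gamma(c)$ is a clique'' is invariant under $\simeq$, reducing the lemma to showing every simple cycle with a clique neighborhood is a backtrack (provided $\gamma\neq K_5$). Indeed, if $c\simeq c'$ with $c\neq c'$, the definition $N_\gamma(c)\cup\{c\}=N_\gamma(c')\cup\{c'\}$ gives $N_\gamma(c')=(N_\gamma(c)\setminus\{c'\})\cup\{c\}$; the first set is a subclique of $N_\gamma(c)$, and $c$ is adjacent in $\gamma$ to every element of $N_\gamma(c)$ by the very definition of the neighborhood, so $N_\gamma(c')$ is again a clique.

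Next, I would rule out that such a $c$ has length $k\geq 4$. Writing $c$ on vertices $v_1,\ldots,v_k$, the backtrack $b_i$ associated with each edge $\{v_i,v_{i+1}\}$ of $c$ meets $c$ at $v_i$ and hence lies in $N_\gamma(c)$. But $b_1$ and $b_3$ are supported respectively on $\{v_1,v_2\}$ and $\{v_3,v_4\}$, four vertices that are pairwise distinct since $c$ is simple; so $b_1$ and $b_3$ are non-adjacent in $\gamma$, contradicting the cliqueness of $N_\gamma(c)$.

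The remaining, delicate case is when $c$ is a triangle on $\{v_1,v_2,v_3\}$. The three edge-backtracks pairwise intersect, so the obstruction must come from elsewhere: the key is to pit a backtrack hanging off a single vertex of the triangle against the backtrack on the opposite edge. Concretely, if $G$ contained an edge $\{v_1,u\}$ with $u\notin\{v_2,v_3\}$, then the backtrack $b_{v_1u}$ would belong to $N_\gamma(c)$ but share no vertex with $b_{23}$, contradicting cliqueness. By symmetry no edge of $G$ leaves $\{v_1,v_2,v_3\}$; connectedness of $G$ then forces $V(G)=\{v_1,v_2,v_3\}$, and since $c$ uses every triangle edge we conclude $G=K_3$. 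A direct enumeration shows that the bi-directed $K_3$ has exactly five simple cycles (three backtracks and the two oppositely oriented triangles), all pairwise sharing vertices, so $\gamma=K_5$, contradicting the hypothesis.

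The main obstacle is the triangle case: since the edge-backtracks of $c$ all pairwise intersect, one must look beyond $c$ itself and use \emph{external} backtracks (those incident to a single vertex of the triangle) to detect any edge outside $c$ and thereby pin down the global structure of $G$. The length $\geq 4$ case and the propagation step are comparatively immediate.
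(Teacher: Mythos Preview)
Your argument is correct and follows essentially the same route as the paper: first propagate the clique hypothesis across the equivalence class, then show that any simple cycle of length $\geq 3$ with clique neighborhood forces $G=K_3$ and hence $\gamma=K_5$. The paper compresses your two cases ($\ell\geq 4$ and $\ell=3$) into the single sentence ``one can always find two non-intersecting backtracks that intersect $c$, unless $c$ is one orientation of an isolated triangle,'' and handles the invariance step via the observation that $N_\gamma(c)$ is a clique iff $\{c\}\cup N_\gamma(c)$ is, which is constant on $[c]$; your explicit computation of $N_\gamma(c')$ is an equally valid way to see this.
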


\begin{proof} Remark that $N_\gamma(c)$ is a clique if and only if $\{ c \} \cup N_\gamma(c)$ is a clique. Thus, if $N_\gamma(c)$ is a clique, the same can be said about all $N_\gamma(c') , c' \simeq c$. It now suffices to prove that $N_\gamma(c)$ being a clique guarantees that $c$ is a backtrack. By contradiction, if $c$ is a simple cycle of length $\ell(c) \geq 3$, then one can always find two non-intersecting backtracks that intersect $c$, unless $c$ is one orientation of an isolated triangle. The latter case is ruled out as $G = K_3$ implies $\gamma=K_5$. \qquad
\end{proof}

\vspace{2mm}

To pursue our goal of identifying the backtracks in $\gamma$, one can now restrict to equivalence classes $C$ for which none of the neighborhoods $N_\gamma(c) , c \in C$ are cliques. \\

\begin{lemma}\label{step2} Let $G$ be a connected undirected graph and $\gamma$ its hike dependence graph. Let $C$ be an equivalence class of size $| C |$ such that, for all $c \in C$, $N_\gamma(c)$ is not a clique. Then,
\begin{itemize}
	\item[i)] If $|C|$ is odd, $C$ contains exactly one backtrack.
	\item[ii)] If $|C|$ is even and $|C| \neq 4$, $C$ contains no backtrack.\\
\end{itemize}
\end{lemma}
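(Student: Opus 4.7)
The plan combines a parity observation with a rigidity analysis of equivalent backtracks. Every simple cycle of length $\geq 3$ has a reverse orientation $c^{-1}$ distinct from $c$ in $\H$ but with the same vertex set, hence the same closed $\gamma$-neighbourhood; so $c^{-1}\in[c]$ and length-$\geq 3$ cycles contribute to $|C|$ in pairs. Consequently the parity of $|C|$ equals the parity of the number of backtracks in $C$. It therefore suffices to show that, under the non-clique hypothesis on the $N_\gamma(c),\ c\in C$, the class $C$ contains at most two backtracks, and exactly two only when $|C|=4$.

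First I analyse when two backtracks $c_{xy},c_{xz}$ sharing a vertex $x$ can satisfy $N[c_{xy}]=N[c_{xz}]$. Any neighbour $w$ of $y$ outside $\{x,z\}$ produces a backtrack $c_{yw}\in N[c_{xy}]\setminus N[c_{xz}]$, so every neighbour of $y$ must lie in $\{x,z\}$. If $y$ has degree one then $N_\gamma(c_{xy})$ reduces to the cycles through $x$, which pairwise intersect at $x$ and thus form a clique -- contradicting the hypothesis on $C$. Hence both $y$ and $z$ have degree $2$, the edge $\{y,z\}$ exists, and $\{x,y,z\}$ spans a triangle whose vertices $y,z$ have no neighbours outside it. Three pairwise-equivalent backtracks would then either share a vertex (forcing one endpoint to have two distinct degree-$2$ partners, absurd) or form a triangle (forcing all three endpoints to have degree $2$ within it, whence $G=K_3$); but for $G=K_3$ the independence graph is $K_5$ and every $N_\gamma(c)$ is a clique, again contradicting the hypothesis. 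This yields the ``at most two backtracks'' bound.

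Now suppose $C$ contains exactly two backtracks $c_{ij}\simeq c_{jl}$, with $\{i,j,l\}$ a triangle and $i,l$ of degree $2$. The two oriented triangles $t,t^{-1}$ on $\{i,j,l\}$ also lie in $C$: since $l$'s neighbours are exactly $\{i,j\}$, every cycle through $l$ also passes through $i$ or $j$, so $N[t]=N[c_{ij}]$. For the reverse inclusion I enumerate remaining candidates $c\in[c_{ij}]$; necessarily $V(c)\cap\{i,j\}\neq\emptyset$. If $c$ passes through $i$, the only two incident edges $\{i,j\},\{i,l\}$ force $c\in\{c_{ij},c_{il},t,t^{-1}\}$; but $c_{il}\notin[c_{ij}]$ because $j$ must have some neighbour $m\notin\{i,l\}$ (else $G=K_3$), and then $c_{jm}\in N[c_{ij}]\setminus N[c_{il}]$. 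If instead $c$ passes through $j$ but not $i$, then $c_{il}\in N[c]$ forces $l\in V(c)$, after which the degree-$2$ constraint on $l$ reduces $c$ to $c_{jl}$. Hence $[c_{ij}]=\{c_{ij},c_{jl},t,t^{-1}\}$ and $|C|=4$.

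Combined with the parity observation, these facts immediately yield (i) and (ii): an odd $|C|$ forces a single backtrack, while an even $|C|\neq 4$ forces none. The main obstacle is the rigidity analysis; in particular, tracking how the non-clique hypothesis excludes $G=K_3$ at each juncture where the argument would otherwise collapse is the delicate part.
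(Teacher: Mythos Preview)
Your argument is correct and follows essentially the same route as the paper's proof: both hinge on (a) the parity observation that cycles of length $\geq 3$ come in orientation pairs, so the number of backtracks in $C$ has the same parity as $|C|$, and (b) a rigidity analysis showing that two equivalent backtracks with non-clique neighbourhoods force the specific triangle configuration whose equivalence class has exactly four elements. Your write-up is in fact more explicit than the paper's in ruling out three pairwise-equivalent backtracks and in verifying $[c_{ij}]=\{c_{ij},c_{jl},t,t^{-1}\}$ element by element; the paper dispatches these steps more tersely via the figure. One small slip: where you write ``the independence graph is $K_5$'' for $G=K_3$ you mean the dependence graph $\gamma$, but the conclusion (every $N_\gamma(c)$ is a clique) is unaffected.
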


\begin{proof} For the proof, we show that there is only one configuration possible for which two backtracks $b,b'$ are equivalent without their neighborhood being cliques, and this configuration contains precisely $4$ elements in its equivalence class. Let $b$ be a backtrack and $i,j$ its endpoints. If $N_\gamma(b)$ is not a clique, there exist two non-intersecting backtracks $b_i,b_j$ that intersect $b$ at $i$ and $j$ respectively, thus forming a path $b_i,b,b_j$. For a backtrack $b'$ to be equivalent to $b$, $b'$ must intersect $b,b_i$ and $b_j$. By symmetry, this reduces to only one possibility represented below.
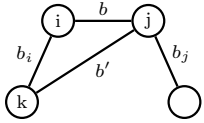
\begin{figure}[h]
\centering
\begin{tikzpicture}[line width=0.3mm,scale=0.4]
\begin{scope}[xshift=0cm]
\vertex[circle,  minimum size=12pt](1) at  (0,0) {i};
\vertex[circle,  minimum size=12pt](2) at  (3,0) {j};
\vertex[circle,  minimum size=12pt](3) at  (4.2,-2.7) {};
\vertex[circle,  minimum size=12pt](4) at  (-1.2,-2.7) {k};

\draw (4)--(1)--(2)--(3);
\draw (4)--(2);
\node[ minimum size=6pt] at  (1.5,0.4) {$b$};
\node[ minimum size=6pt] at  (-1.1,-1.1) {$b_i$};
\node[ minimum size=6pt] at  (4.1,-1.1) {$b_j$};
\node[ minimum size=6pt] at  (1.5,-1.6) {$b'$};
\end{scope}

\end{tikzpicture}
\caption{The only configuration with two equivalent backtracks $b,b'$ whose neighborhoods are not a clique in $\gamma$. Edges may be added from the vertex $j$ but no edge can be added from either $i$ or $k$ without breaking the equivalence of $b$ and $b'$.}
\label{fig:relou}
\end{figure}

\noindent Note that $b$ and $b'$ are no longer equivalent if an edge is added to one endpoint of $b_i$. In this configuration, the only simple cycles intersecting $b_i$ are $b,b'$ and the two orientations $t,t'$ of the triangle formed by $b,b'$ and $b_i$. It follows that $[b] \subseteq \{ b,b',t,t' \}$. On the other hand, all the other neighbors of $b$ in $\gamma$ pass through $j$, a vertex which is common to $b,b',t$ and $t'$. We deduce that $ \{ b,b',t,t' \} \subseteq [b] $, in particular $[b]$ contains $4$ elements.\\
Hence, if we exclude the case $|C|=4$, having no clique among $N_\gamma(c), c \in C$ implies that $C$ contains at most one backtrack. We now come to a simple, although crucial, argument for the proof: two orientations of a simple cycle are equivalent. This means in particular that the number of backtracks in an equivalence class of size $|C|$ has the same parity as $|C|$. This concludes the proof of the lemma.\qquad
\end{proof}

\vspace{2mm}

The situation displayed in Figure \ref{fig:relou} must be tackled separately, leading to the final part of the backtrack identification process.\\

\begin{lemma}\label{step3} Let $G$ be a connected undirected graph and $\gamma$ its hike dependence graph. Let $C$ be an equivalence class of size $| C |=4$ such that, $\forall c \in C$, $N_\gamma(c)$ is not a clique. Denote by $T(C)$ the set of neighbors common to all simple cycles in $C$: 
$$  T(C) = \underset{c \in C}{\bigcap} N_\gamma(c). $$
If there exists $c' \in T(C)$ with exactly $4$ neighbors in $\gamma$, then there are exactly two backtracks in $C$. Otherwise, $C$ contains no backtrack.\\
\end{lemma}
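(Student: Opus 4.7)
My plan has two main steps. First, I will pin down which numbers of backtracks $C$ can contain: the two orientations of any non-backtrack simple cycle are always equivalent, so the parity of the number of backtracks in $C$ matches that of $|C| = 4$, giving $0$, $2$, or $4$. The analysis from Lemma~\ref{step2}'s proof shows that the only way for two equivalent backtracks to have non-clique neighborhoods is the configuration of Figure~\ref{fig:relou}, so having any two backtracks in $C$ forces $C = \{b, b', t, t'\}$; in particular this rules out having $4$ backtracks. Thus $C$ contains either $0$ or $2$ backtracks, and the lemma reduces to showing these two cases are separated by the presence of an element of $T(C)$ with exactly $4$ neighbors.

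In the case of two backtracks, where $C = \{b, b', t, t'\}$ as in Figure~\ref{fig:relou}, I will exhibit $b_i$ as the witness. The equivalence of $b$ and $b'$ forbids adding any edge at $i$ or $k$, so these vertices have degree~$2$ in $G$. The only simple cycles meeting $\{i, k\}$ are then $b, b', b_i, t, t'$, and excluding $b_i$ itself yields $N_\gamma(b_i) = \{b, b', t, t'\} = C$, hence $b_i \in T(C)$ with $|N_\gamma(b_i)| = 4$, as required.

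For the remaining case, I will assume $C$ contains no backtrack and argue by contradiction that no $c' \in T(C)$ has $|N_\gamma(c')| = 4$. Such a $c'$ would satisfy $N_\gamma(c') = C$ (since $C \subseteq N_\gamma(c')$ and both have size $4$), forcing $N_\gamma(c')$ to be backtrack-free. I will derive a contradiction by producing backtracks forced into $N_\gamma(c')$ by the bi-directed structure of $G$. If $c'$ has length $\geq 3$, then each of its $\ell(c') \geq 3$ edges is a backtrack distinct from $c'$ and intersecting it, placing $\ell(c')$ backtracks in $N_\gamma(c')$. If $c'$ is itself a backtrack $\{u, v\}$, then any cycle in $C$ passes through $u$ or $v$; being non-backtrack and different from $c'$, it must use an edge at that vertex other than $\{u, v\}$, and that extra edge is again a backtrack in $N_\gamma(c')$. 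Either way, the backtrack-freeness of $N_\gamma(c')$ is contradicted. The main obstacle I anticipate is confirming that Figure~\ref{fig:relou} is genuinely the unique configuration of two equivalent backtracks with non-clique neighborhoods, ensuring that the ``$0$ or $2$ backtracks'' dichotomy is exhaustive; with this structural rigidity in hand, the rest is a direct count of backtracks.
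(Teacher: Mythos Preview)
Your proof is correct and follows essentially the same approach as the paper's: establish the $0$-or-$2$ dichotomy via parity and the rigidity of the Figure~\ref{fig:relou} configuration from Lemma~\ref{step2}, exhibit $b_i$ as the degree-$4$ witness in $T(C)$ when $C=\{b,b',t,t'\}$, and in the zero-backtrack case show that any $c'\in T(C)$ with $|N_\gamma(c')|=4$ would force $N_\gamma(c')=C$ yet must also contain a backtrack. Your treatment of the zero-backtrack direction is in fact more careful than the paper's, which simply asserts that such a $c'$ ``also intersects an edge''; your explicit case split on $\ell(c')\geq 3$ versus $c'$ a backtrack fills in exactly what that phrase means.
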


\begin{proof} We start by noting that $T(C)$ contains no element in $C$, in fact $T(C) = N_\gamma(c) \setminus C$ for any $c \in C$. Since $|C|=4$ (and we ruled out the situation of Lemma \ref{step1}), we have either two backtracks in $C$ or none. In the former case, we fall precisely in the configuration displayed in Figure \ref{fig:relou}, the only configuration with two equivalent backtracks whose neighborhoods are not cliques. In this case, using the same notation, $C = \{b,b',t,t'\}$ and $b_i \in T(C)$ has four neighbors in $\gamma$, namely $b,b',t,t'$. On the other hand, if $C$ contains no backtrack, it comprises the two orientations of two simple cycles of length $\geq 3$, which we denote by $c_1,c_1',c_2,c_2'$. Any common neighbors $c'$ of $c_1,c_1',c_2,c_2'$ in $\gamma$ also intersects an edge, and thus can not satisfy the condition $N_\gamma(c') =4$.\qquad
\end{proof}

\vspace{2mm}

Since all the vertices of $\gamma$ belong to one equivalence class, this lemma shows that all backtracks can be identified in the hike dependence graph $\gamma$, unless $\gamma = K_5$, which is not covered by Lemma \ref{step1}.  The line graph of $G$ can then be obtained as the subgraph of $\gamma$ induced by the backtracks. It is well-known that an undirected graph $G$ is uniquely determined by its line graph unless $G \in \{ K_3,K_{1,3} \}$. Thus, these two cases must be dealt with separately. If $G= K_3$, the hike dependence graph is the only problematic case $\gamma = K_5$. On the other hand, $G=K_{1,3}$ is the only undirected graph with hike dependence graph $\gamma=K_3$. Thus, the only case where $G$ is not recoverable from $\gamma$ is for $\gamma = K_5$, which corresponds to $G \in \{ K_3,K_{1,5} \}$. This concludes the proof.\qquad
\end{proof}

\vspace{2mm}

For sake of clarity, let us consider an example where we reconstruct an undirected graph $G$ from its hike dependence graph $\gamma$, following the three main steps of the proof of Theorem \ref{poset_charac}. 

\begin{figure}[h]
\centering
\begin{tikzpicture}[line width =0.3mm,scale=0.6]
\begin{scope}[xshift=0cm]
\vertex[circle,  minimum size=7pt](1) at  (0,1) {};
\vertex[circle,  minimum size=7pt](2) at  (0,-1) {};
\vertex[circle,  minimum size=7pt](3) at  (2,0) {};
\vertex[circle,  minimum size=7pt](4) at  (4,0.9) {};
\vertex[circle,  minimum size=7pt](5) at  (4,-0.9) {};
\vertex[circle,  minimum size=7pt](6) at  (5,2.5) {};
\vertex[circle,  minimum size=7pt](7) at  (5,-2.5) {};
\vertex[circle,  minimum size=7pt](8) at  (7,0) {};

\draw (1)--(2)--(3)--(1);
\draw (3)--(4)--(5)--(3);
\draw (3)--(6)--(7)--(3);
\draw (8)--(6)--(7)--(8);
\draw (8)--(4)--(5)--(8);
\draw (4)--(7)--(5)--(6)--(4);
\end{scope}
\end{tikzpicture}
\caption{A hike dependence graph $\gamma$.}
\label{fig:ex}
\end{figure}
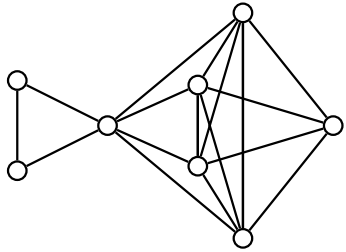

\textbf{Step 1:}\textit{ Partition the vertices into equivalence classes.} The equivalence classes are the maximal sets of adjacent vertices that share the same neighbors other than themselves (the maximal sets that are both a clique and a module). In the example, there are four equivalence classes represented in Figure \ref{fig:ex2}.\\

\textbf{Step 2:} \textit{Identify the backtracks within each equivalence class. } If the union of an  equivalence class and its common neighbors form a clique, all its elements are backtracks (e.g. the class I). Otherwise, equivalence classes of size $k \neq 4$ contain $1$ backtrack if $k$ is odd (e.g. classes II and IV) and no backtrack otherwise. If none of the two previous conditions is verified, an equivalence class contains either two backtracks if one of its neighbors has a neighborhood of size $4$ (e.g. the class III) and no backtrack otherwise.\\

\textbf{Step 3:}\textit{ Extract the line graph of $G$.}  The line graph of $G$ is the subgraph of $\gamma$ induced by the backtracks. 

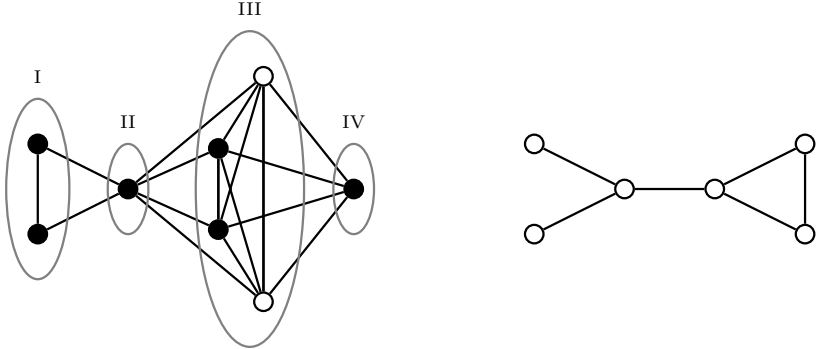
\begin{figure}[H]
\centering
\begin{tikzpicture}[line width =0.3mm,scale=0.6]

\begin{scope}[xshift=0cm]
\vertex[circle,  fill=black, minimum size=7pt](1) at  (0,1) {};
\vertex[circle,  fill=black,  minimum size=7pt](2) at  (0,-1) {};
\vertex[circle,  fill=black,  minimum size=7pt](3) at  (2,0) {};
\vertex[circle,  fill=black,  minimum size=7pt](4) at  (4,0.9) {};
\vertex[circle,  fill=black,  minimum size=7pt](5) at  (4,-0.9) {};
\vertex[circle,  minimum size=7pt](6) at  (5,2.5) {};
\vertex[circle,  minimum size=7pt](7) at  (5,-2.5) {};
\vertex[circle,  fill=black,  minimum size=7pt](8) at  (7,0) {};

\draw (1)--(2)--(3)--(1);
\draw (3)--(4)--(5)--(3);
\draw (3)--(6)--(7)--(3);
\draw (8)--(6)--(7)--(8);
\draw (8)--(4)--(5)--(8);
\draw (4)--(7)--(5)--(6)--(4);

\draw[gray] (0,0) ellipse (0.7cm and 2cm);
\draw[gray] (2,0) ellipse (0.45cm and 1cm);
\draw[gray] (4.7,0) ellipse (1.2cm and 3.5cm);
\draw[gray] (7,0) ellipse (.45cm and 1cm);

\node[minimum size=7pt]() at  (0,2.5) {I};
\node[minimum size=7pt]() at  (2,1.5) {II};
\node[minimum size=7pt]() at  (4.7,4) {III};
\node[minimum size=7pt]() at  (7,1.5) {IV};

\end{scope}

\begin{scope}[xshift=11cm]
\vertex[circle, minimum size=7pt](1) at  (0,1) {};
\vertex[circle,  minimum size=7pt](2) at  (0,-1) {};
\vertex[circle,  minimum size=7pt](3) at  (2,0) {};
\vertex[circle,  minimum size=7pt](4) at  (4,0) {};
\vertex[circle,  minimum size=7pt](5) at  (6,1) {};
\vertex[circle,  minimum size=7pt](6) at  (6,-1) {};

\draw (2)--(3)--(1);
\draw (3)--(4)--(5)--(6)--(4);

\end{scope}

\end{tikzpicture}
\caption{On the left, the hike dependence graph $\gamma$ partitioned into  equivalence classes in order to identify the backtracks and extract the line graph (black vertices). On the right, the undirected graph $G$ is deduced from the line graph.}
\label{fig:ex2}
\end{figure}

\subsection{Construction of co-spectral non-isomorphic digraphs}\label{sec:constr}

It is a basic, rarely questioned, tenet of network analysis that walks accurately reflect the properties of the network on which they take place. Following this tenet most techniques used to distinguish networks are walk-based, see e.g. \cite{Cooper2012, Estrada1, Estrada2} and references therein. Yet, the failure of Theorem~\ref{poset_charac} on directed graphs shows that this tenet is incorrect, at least for closed walks, even if the graphs considered are strongly connected. Consequences of this observation in network analysis as well as methods to generate pairs of non-isomorphic digraphs with identical walk sets are discussed in this section. \\

%While knowing the simple cycles along with their length and commutation relations suffices to characterize an undirected graph of size $N$ up to isomorphism, it is far from being the case for digraphs.
%In Figure \ref{FigG1G2}, we show an example of two non-isomorphic digraphs with the same hike structure $(P_G,\ell)$, and consequently, with the same spectrum. 
First, we observe that pairs of cospectral non-isomorphic digraphs can be generated by relying on graph-transformations that leave the underlying hike poset invariant. Indeed, because the poset determines the spectrum, digraphs with the same hike structure are necessarily co-spectral. The existence of such transformations is not trivial and is strongly hinted at by the failure of Theorem~\ref{poset_charac} on directed graphs.

We thus looked for such transformations and identified two of them pertaining to pairs of simple cycles. Indeed, if two simple cycles intersect each-other, and that at least one vertex of their intersection only belongs to these two cycles, then so long as the intersection is maintained, we may add or remove vertices belonging to both cycles without modifying the poset. A symmetric removal or addition of vertices somewhere else in the graph will keep the number of vertices constant, ensuring that no additional zero eigenvalues will appear. We may also simply move the point(s) of intersection of two simple cycles in the graph without modifying the hike structure. Although discovered empirically, the availability of these transformations  can be tested systematically on any given-digraph. If this test is positive, then we can generate at least one connected non-isomorphic co-spectral partner to the original digraph.
Below is an example of partners generated this way:
\begin{figure}[H]
\centering
\begin{tikzpicture}[line width=.3mm,scale=0.6]
\begin{scope}[xshift=0cm]
\vertex[circle,  minimum size=7pt](1) at  (0,0) {};
\vertex[circle,  minimum size=7pt](2) at  (2,1) {};
\vertex[circle,  minimum size=7pt](3) at  (2,-1) {};
\vertex[circle,  minimum size=7pt](4) at  (4,1) {};
\vertex[circle,  minimum size=7pt](5) at  (4,-1) {};
\vertex[circle,  minimum size=7pt](6) at  (6,-1) {};

\draw[->] (1) edge (2);
\draw[->] (2) edge (3);
\draw[->] (3) edge (1);
\draw[->] (3) edge (5);
\draw[->] (5) edge (4);
\draw[->] (4) edge (2);
\draw[->] (5) edge (6);
\draw[->] (6) edge (5);

\end{scope}

\begin{scope}[xshift=9cm]
\vertex[circle,  minimum size=7pt](1) at  (0,-1) {};
\vertex[circle,  minimum size=7pt](7) at  (1,1) {};
\vertex[circle,  minimum size=7pt](2) at  (2,1) {};
\vertex[circle,  minimum size=7pt](3) at  (2,-1) {};
\vertex[circle,  minimum size=7pt](4) at  (4,1) {};
\vertex[circle,  minimum size=7pt](5) at  (4,-1) {};

\draw[<-] (1) edge (7);
\draw[<-] (7) edge (3);
\draw[<-] (2) edge (3);
\draw[<-] (3) edge (1);
\draw[<-] (3) edge (5);
\draw[<-] (4) edge (2);
\draw[<-] (5) edge (4);
\draw[<-] (4) edge (5);

\end{scope}

\end{tikzpicture}
%\begin{center}
 %\includegraphics[width=0.8\textwidth]{G1G2.pdf}
%\end{center}
\caption{\label{FigG1G2}Two strongly connected co-spectral digraphs with the same hike structure. The graphs are so similar that all their immantal polynomials ($\det(\mathsf{I} - z\mathsf{A})$, $\perm(\mathsf{I} - z\mathsf{A})$, etc.) are identical.}
\end{figure}
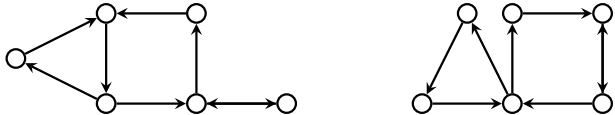  
%
%
%\begin{figure}[H]
%\centering
%\begin{tikzpicture}[line width =0.3mm,scale=0.6]
%
%
%
%\begin{scope}[xshift=0cm]
%\vertex[circle, minimum size=7pt](1) at  (0,1) {};
%\vertex[circle,  minimum size=7pt](2) at  (0,-1) {};
%\vertex[circle,  minimum size=7pt](3) at  (2,0) {};
%\vertex[circle,  minimum size=7pt](4) at  (4,1) {};
%\vertex[circle,  minimum size=7pt](5) at  (3.5,-1.5) {};
%
%\draw[<->] (1)--(2);
%\draw[->] (2)--(3);
%\draw[->] (3)--(1);
%\draw[->] (3)--(4);
%\draw[<-] (4)--(5);
%\draw[<->] (3)--(5);
%
%\end{scope}
%
%
%\begin{scope}[xshift=9cm]
%\vertex[circle, minimum size=7pt](1) at  (0,1) {};
%\vertex[circle,  minimum size=7pt](2) at  (0,-1) {};
%\vertex[circle,  minimum size=7pt](3) at  (2,0) {};
%\vertex[circle,  minimum size=7pt](4) at  (4,1) {};
%\vertex[circle,  minimum size=7pt](5) at  (3.5,-1.5) {};
%
%\draw[->] (1)--(2);
%\draw[->] (2)--(3);
%\draw[->] (3)--(1);
%\draw[<->] (3)--(4);
%\draw[<->] (2)--(5);
%
%\end{scope}
%
%
%\end{tikzpicture}
%\caption{Two co-spectral digraphs with the same hike structure comprised of two non-intersecting backtracks and a triangle intersecting them.}
%\label{fig:ex4}
%\end{figure}
There is little doubt that there exist further, less simple, graph transformations leaving the hike structure invariant, this time involving not two but three or more simple cycles. A systematic effort to find such transformations is therefore interesting.  Now the main consequence in network analysis of the preliminary results presented in this section is that methods based exclusively on cycles will never be able to separate certain pairs of graphs. 

A corollary of this observation concerns the discriminating power of the set of all immanants of a graph. It is already known that some graphs, in particular almost all trees, share some of their immanantal polynomials \cite{Schwenk1973,Merris1991}. We may see the failure of Theorem~\ref{poset_charac} as a generalisation of this result to digraphs: there exists an infinite number of pairs of digraphs with the same hike structure and thus the same set of \emph{all} their immanantal polynomials. In fact, since non-isomorphic trees have, by  Theorem~\ref{poset_charac}, different hike structures, this result appears to be stronger than that known on trees in one respect: it shows that some additional information carried by the hike structure beyond that given by all the immanants is still not sufficient to uniquely characterise some directed graphs.\\

We conclude this section with a last method to generate cospectral pairs of non-isomorphic graphs, this time via partial expansions of the von-Mangoldt function that change the hike structure but preserve the set of non-zero eigenvalues. 
The method relies on the observation that two graphs on $N$ vertices have the same set of non-zero eigenvalues if and only if for all strictly positive $0<\ell\leq N$, they have the same number of closed walks of length $\ell$. 
Since the series associated to the von Mangoldt function is precisely the series of all walks of strictly positive length, two connected digraphs $G$ and $G'$ sharing the same von Mangoldt series  $\mathcal{S}\Lambda(s)$ have the same non-zero spectrum. 
Thus, the path-sum formulation of this series suggests that $G'$ can be generated from $G$ using graph transformations that leave the path-sum invariant. 
A simple example illustrating these observations is shown below:
\vspace{-2mm}
\tikzset{every loop/.style={min distance=5mm,in=0,out=80,looseness=8}}
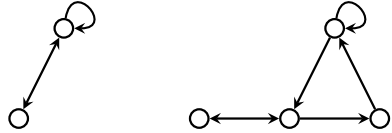
\begin{figure}[H]
\centering
\begin{tikzpicture}[line width=.3mm,scale=0.6]
\begin{scope}[xshift=1cm]
\vertex[circle,  minimum size=7pt](1) at  (0,0) {};
\vertex[circle,  minimum size=7pt](2) at  (1,2) {};
%\vertex[circle,  minimum size=6pt](3) at  (-1,0) {};
%\vertex[circle,  minimum size=6pt](4) at  (-1,1) {};
\draw[<->] (1) edge (2);
\draw[->] (2) edge  [loop above,thick,minimum size=10pt] (2);
\end{scope}

\begin{scope}[xshift=7cm]
\vertex[circle,  minimum size=7pt](1) at  (-2,0) {};
\vertex[circle,  minimum size=7pt](2) at  (0,0) {};
\vertex[circle,  minimum size=7pt](3) at  (2,0) {};
\vertex[circle,  minimum size=7pt](4) at  (1,2) {};
\draw[<->] (1) edge (2);
\draw[->] (2) edge (3);
\draw[->] (3) edge (4);
\draw[->] (4) edge (2);
\draw[->] (4) edge  [loop above,thick,minimum size=10pt] (4);
\end{scope}
\end{tikzpicture}
\caption{\label{Ex1PS}Two non-isomorphic directed graphs with the same non-zero spectrum.}
\end{figure}
\vspace{-2mm}  
\noindent The unlabelled von Mangoldt series of graph on the left has path-sum expression
\begin{equation*}
\mathcal{S}\Lambda_{\text{left}}(s)=\frac{1}{1-e^{-s}-e^{-2s}}+\frac{1}{1-\frac{e^{-2s}}{1-e^{-s}}}-2= \frac{e^s+2}{e^{2 s}-e^s-1},
\end{equation*}
while that of the graph on the right is
\begin{align*}
\mathcal{S}\Lambda_{\text{right}}(s)&=\frac{1}{1-e^{-s}-\frac{e^{-3s}}{1-e^{-2s}}}+\frac{1}{1-\frac{e^{-3s}}{(1-e^{-s})
   (1-e^{-2s})}}+\frac{1}{1-\frac{e^{-2s}}{1-\frac{e^{-3s}}{1-e^{-s}}}}+\frac{1}{1-e^{-2s}-\frac{e^{-3s}}{1
   -e^{-s}}}-4,\\
   &= \frac{e^s+2}{e^{2 s}-e^s-1}.
\end{align*}
That is, their unlabelled von Mangoldt series are equal $\mathcal{S}\Lambda_{\text{left}}(s)=\mathcal{S}\Lambda_{\text{right}}(s)$. In particular, it follows that both graphs have the same non-zero eigenvalues $\frac{1}{2} \left(1\pm\sqrt{5}\right)$. 
%
%
%if there exists a vertex $\alpha$ on a connected (di)graph $G$ and a vertex $a$ on another connected (di)graph $G'$ such as the characteristic series of all cycles off $\alpha$ on $G$ equals the characteristic series of all cycles off $a$ on $G'$, then $G$ and $G'$ must at least have the same non-zero spectrum. 

The transformation that we used to generate the exemple of Fig.~(\ref{Ex1PS}) is a partial expansion of the continued fraction for $\mathcal{S}\Lambda(s)$ into Taylor series. Indeed, consider a piece of path-sum of the form $1/(1-e^{-s\ell_1}/(1-e^{-s\ell_2}))$. Since e.g. $e^{-s\ell_1}/(1-e^{-s\ell_2}) = e^{-s\ell_1} + e^{-s(\ell_1+\ell_2)}/(1-e^{-s\ell_2})$ we have 
$$
\frac{1}{1-\frac{e^{-s\ell_1}}{1-e^{-s\ell_2}}}=\frac{1}{1-e^{-s\ell_1}-\frac{e^{-s(\ell_1+\ell_2)}}{1-e^{-s\ell_2}}}.
$$
On the graph this equality translates into the following general transformation preserving the von Mangoldt series:
\begin{figure}[H]
\begin{center}
%\vspace{-3mm}
\includegraphics[width=0.4\textwidth]{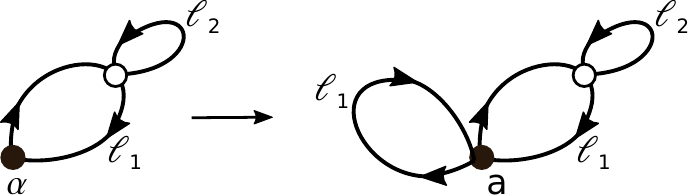}

\caption{\footnotesize{A transformation of directed graphs preserving the non-zero spectrum.}}
\label{fig:ps}
\end{center}
\end{figure}

\noindent This procedure is very general: any piece of a path-sum can be expanded at any order, and the result will always describe the von Mangoldt series of a directed graph with the same non-zero spectrum as the initial one. Finally, several such transformations can be used in conjunction to keep the overall number of vertices constant and thus generate true cospectral pairs.\\

%\textcolor{red}{
%We conclude this section with an alternative method to generate cospectral pairs, using path-sums. 
%}
%

\section{Conclusion}

Our results demonstrate that an ``algebraic theory of hikes" can be developed in close parallel to number theory. Although hikes only form a semi-commutative monoid, an equivalent to the fundamental theorem of arithmetic holds on it and recovers a plethora of relations between formal series, with applications in both general combinatorics and number theory. For example, we found that MacMahon's master theorem and the number-theoretic inverse of a totally multiplicative function $f$ over the integers $f(n)^{-1}=f(n)\mu_{\mathbb{N}}(n)$ \cite{Apostol1976}, can be linked to the same general result concerning hikes.
\vspace{2mm}  

We believe that our approach also offers a novel perspective on outstanding open problems of enumerative combinatorics on graphs. Most notably, proving asymptotic estimates for the number 
%$\Pi(\ell)$ 
of self-avoiding paths 
%of length $\ell$ 
on infinite regular lattices 
%$\Pi(\ell)\sim\alpha\,\mu^\ell \ell^\gamma$, $\alpha,\mu,\gamma\in\mathbb{R}$
%s still "widely open" \cite{Flajolet2009} and 
corresponds to establishing the prime number theorem for hikes. 
In this respect, an ``algebraic theory of hikes" would find itself in the situation of number theory in the mid 19th-century. Accordingly, partial progress towards asymptotic prime-counting may be possible via a better understanding of the relation between the zeta function of $P_G$ and the primes.
\vspace{2mm}

Some results pertaining to this relation have been left out of the present study because of length considerations and will be presented in future works. In particular, i) there exists an exact relation between $\zeta$ and the ordinary generating function of the primes; ii) $\zeta$ admits an infinite product expansion giving rise to a functional equation on at least some types of graphs; and iii) its logarithm is a branched continued fraction involving only the primes. Furthermore, $\zeta$-based systematic procedures for enumerating certain types of hikes are readily available. Observe indeed that Eq.~(\ref{zprimeoverz}) indicates that the set of hikes with non-zero coefficient in $\log \zeta$, called the support of $\log \zeta$, is the set of closed walks. In fact, the logarithm of $\zeta$ is one of the simplest member of an infinite family of hypergeometric functions of $\zeta$, whose supports are sets of hikes obeying precise  constraints. For example, the support of  $2 -2(\log \zeta+1)/\zeta$ is the set of hikes $h=p_1\cdots p_n$ for which there exists a prime $p_i$ such that $h=p_1\cdots p_{i-1}p_{i+1}\cdots p_n$ is a walk. 
In every case, exploiting the spectral decomposition of $\zeta$ provides explicit formulas for counting the hikes of the support from the spectrum of $\mathsf{A}$.

\section*{Acknowledgements}
P.-L. Giscard acknowledges financial support from the Royal Commission for
the Exhibition of 1851. The authors are grateful to Thibault Espinasse and Xavier Viennot for many insightful discussions on the theorey of heaps of pieces, and to two anonymous referees for their thorough revisions and constructive remarks.
%\begin{acknowledgment}
%s
%\end{acknowledgment}
%
%	
 \bibliographystyle{siam}
 \bibliography{AlgComb_SIAM}

\end{document}